\numberwithin{equation}{section}
\theoremstyle{definition}
\newtheorem{definition}{Definition}[section]
\newtheorem{example}[definition]{Example}
\theoremstyle{remark}
\newtheorem{remark}[definition]{Remark}
\theoremstyle{plain}
\newtheorem{proposition}[definition]{Proposition}
\newtheorem{theorem}[definition]{Theorem}
\newtheorem{lemma}[definition]{Lemma}
\newtheorem{result}[definition]{Result}
\newcommand{\OM}{\Omega}
\newcommand{\unitdisk}{\mathbb{D}}
\newcommand{\clos}[1]{\overline{#1}}
\newcommand{\koba}{\mathsf{k}}
\newcommand{\lrarw}{\longrightarrow}
\newcommand{\bdy}{\partial}
\newcommand{\bcdot}{\boldsymbol{\cdot}}
\newcommand{\distance}{\mathrm{dist}}
\newcommand{\Z}{\mathbb{Z}}
\newcommand{\nat}{\mathbb{N}}
\newcommand{\C}{\mathbb{C}} 
\newcommand{\R}{\mathbb{R}} 
\newcommand*{\rom}[1]{\expandafter\@slowromancap\romannumeral #1@}
\begin{document}
\hfill
\title[Horospheres]{Geometry of horospheres in Kobayashi hyperbolic domains}
	\author{Vikramjeet Singh Chandel, Nishith Mandal}
	
\address{Department of mathematics \& statistics, IIT Kanpur}
\email{vschandel@iitk.ac.in}
	
\address{Department of mathematics \& statistics, IIT Kanpur}
\email{nishithm21@iitk.ac.in, nishithmath@gmail.com}

\keywords{Kobayashi distance, geodesics, horospheres, Gromov product,
Gromov hyperbolicity, visibility, Dini-smooth boundary}

\subjclass[2020]{Primary: 32F45, 53C23; Secondary: 32Q45, 32A40, 53C22.}

\begin{abstract}
For a Kobayashi hyperbolic domain, Abate introduced the notion of small and big horospheres of 
a given radius at a boundary point with a pole. In this article, we investigate
which domains have the property that closed big horospheres and closed small horospheres centered 
at a given point and of a given radius intersect the 
boundary only at that point? We prove that any model-Gromov-hyperbolic domain have this property.
To provide examples of non-Gromov-hyperbolic domains, 
we show that unbounded locally model-Gromov-hyperbolic
domains and bounded, Dini-smooth, locally convex domains\,---\,that are locally visible\,---\,also have
this property. 
Finally, using the geometry of the horospheres, 
we present a result about the homeomorphic extension of biholomorphisms and give an application of it. 
\end{abstract}

\maketitle

\section{Introduction and statement of the results}\label{S:intro}
Let $\OM\subset\C^d$ be a Kobayashi hyperbolic domain equipped with
the Kobayashi distance $\koba_\OM$. Given a point
$x\in\bdy\OM$, $o\in\OM$ and $R>0$, Abate \cite{{Abate:horo88}} defined the following objects:
 \begin{align*}
 H^s_o(x, R)&:=\Big\{z\in\OM\,:\,\limsup_{w\to x}\big(\koba_\OM(z, w)-\koba_\OM(o, w)\big)<(1/2)\log R\Big\}\\
 H^b_o(x, R)&:=\Big\{z\in\OM\,:\,\liminf_{w\to x}\big(\koba_\OM(z, w)-\koba_\OM(o, w)\big)<(1/2)\log R\Big\}. 
 \end{align*}
The sets $H^s_o(x, R),  H^b_o(x, R)$ are called the {\em small horosphere} and the {\em big horosphere} 
with center $x$, pole $o$ and radius $R$ respectively. Some of their basic properties 
are stated in Section~\ref{S:prelim}. 
Note that although the objects $H^s_o(x, R)$, $H^b_o(x, R)$ can be defined for any Kobayashi hyperbolic domain,
they could be empty set. For $H^b_o(x, R)$ completeness of $\koba_\OM$ is a 
natural condition under which they are nonempty for all choices of parameters; see
Lemma~\ref{lm:nonemphoro} and Lemma~\ref{lm:horos_empty} in Section~\ref{s:genresulthoro}. 
On the other hand, $H^s_o(x, R)$ could be empty, for some choice of parameters, for a domain $\OM$ with 
$\koba_\OM$ complete; see e.g. Example~\ref{exa:smallhoroempty}.
\smallskip

In the case
$\OM=\unitdisk$, the open unit disc in 
the complex plane centered at $0$, using the formula for $\koba_\unitdisk$, a straightforward computation shows 
that when $o=0$, $x\in\bdy\unitdisk$, for each $z\in\unitdisk$, one has 
\[
\lim_{w\to x}\big(\koba_\unitdisk(z, w)-\koba_\unitdisk(o, w)\big)=\frac{1}{2}\log\Bigg(\frac{|x-z|^2}{1-|z|^2}\Bigg), 
\]
and therefore
\[
H^s_o(x, R)=H^b_o(x, R)=H(x, R):=\Bigg\{z\in\unitdisk\,:\,\frac{|x-z|^2}{1-|z|^2}<R\Bigg\}. 
\]
Geometrically, $H(x, R)$ is the disc with center $x/{(1+R)}$ and radius $R/(R+1)$, internally tangent to $\bdy\unitdisk$
at $x$. In particular, closures of both the small and big horospheres of arbitrary radius with 
center $x$ and pole at $0$ intersect the 
boundary of the unit disc only at the point $x$. An analogous result holds true for the Euclidean unit ball $\mathbb{B}^n$ of 
$\C^n$, $n\geq 2$. On the other hand, for the $n$-dimensional polydisc ${\unitdisk}^n$, $n\geq 2$, things are quite different. In this case, 
taking $o=0\in{\unitdisk}^n$, Abate
showed that $\clos{H^s_0(x, R)}\cap\bdy{\unitdisk}^n=\{x\}$
if and only if $x$ is on the Shilov boundary $(\bdy\unitdisk)^n$ of ${\unitdisk}^n$. On the other hand,
$\clos{H^b_0(x, R)}\cap\bdy{\unitdisk}^n$ always contains $\{x\}$ properly
(see Proposition~2.4.12 and Corollary~2.4.13 in \cite{Abate:iteration89}). 
\smallskip

Motivated from the above observations, we ask the following problem: 
\begin{itemize}
\item[$(*)$] Under what conditions on a domain $\OM\subset\C^d$, it is true that for any $x\in\bdy\OM$, $o\in\OM$ and $R>0$
\begin{equation}\label{E:smallbigsingle}
  \clos{H^s_o(x, R)}\cap\bdy{\OM}=\clos{H^b_o(x, R)}\cap\bdy{\OM}=\{x\}?
  \end{equation}
\end{itemize}
Geometrically, condition~\ref{E:smallbigsingle} above implies that both $H^s_o(x, R)$ and $H^b_o(x, R)$ should be 
nonempty and their diameter tends to zero as $R$ decreases to zero. 
If we restrict our attention to bounded simply-connected planar domains, to the best of our knowledge, we 
are not aware of any result that characterizes such domains having property \ref{E:smallbigsingle} above.
We present such a characterization.
\begin{theorem}\label{thm:jordandomain}
Let $\OM\subset\C$ be a bounded simply-connected domain. Let 
$\psi:\unitdisk\lrarw\OM$ be a Riemann map and let $o=\psi(0)$. Then the following are equivalent.
\begin{itemize}
\item For any $x\in\bdy\OM$ and for any $R>0$, $\clos{H^s_o(x, R)}\cap\bdy\OM=\clos{H^b_o(x, R)}\cap\bdy\OM=\{x\}$. 
\smallskip

\item The map $\psi:\unitdisk\lrarw\OM$ extends as a homeomorphism from $\clos{\unitdisk}$ onto $\clos{\OM}$, i.e., 
$\OM$ is a Jordan domain. 
\end{itemize}
\end{theorem}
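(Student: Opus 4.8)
The plan is to transport the entire question to the unit disc through the Riemann map $\psi$ and exploit the biholomorphic invariance $\koba_\OM(\psi(\zeta),\psi(\eta))=\koba_\unitdisk(\zeta,\eta)$. Writing $z=\psi(\zeta)$, $w=\psi(\eta)$ and $o=\psi(0)$, the test function inside the horosphere becomes $B_\zeta(\eta):=\koba_\unitdisk(\zeta,\eta)-\koba_\unitdisk(0,\eta)$, which by the disc computation recalled in the introduction extends continuously to $\clos{\unitdisk}$ with boundary values $\tfrac12\log\big(|\xi-\zeta|^2/(1-|\zeta|^2)\big)$. Since $\psi$ is proper, $|\psi^{-1}(w)|\to1$ as $w\to\bdy\OM$; hence, writing $C(\psi,\xi)$ for the cluster set of $\psi$ at $\xi\in\bdy\unitdisk$ and introducing for $x\in\bdy\OM$ the compact set $\Pi(x):=\{\xi\in\bdy\unitdisk:x\in C(\psi,\xi)\}$, a compactness-and-continuity argument identifies the relevant boundary limits with the $\max$ and $\min$ of the boundary values of $B_\zeta$ over $\Pi(x)$. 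This gives the key reduction
\[
\psi^{-1}\big(H^s_o(x,R)\big)=\bigcap_{\xi\in\Pi(x)}H(\xi,R),\qquad \psi^{-1}\big(H^b_o(x,R)\big)=\bigcup_{\xi\in\Pi(x)}H(\xi,R),
\]
so that the small and big horospheres are, respectively, intersections and unions of the explicit disc horodiscs $H(\xi,R)$ over $\xi\in\Pi(x)$.

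For the direction that a Jordan domain has the property, I would invoke Carath\'eodory's theorem: $\psi$ extends to a homeomorphism $\clos{\unitdisk}\to\clos{\OM}$, so each $\Pi(x)$ reduces to a single point $\xi_x$ with $\psi(\xi_x)=x$. Both horospheres then equal $\psi(H(\xi_x,R))$; since the extension commutes with closures and $\clos{H(\xi_x,R)}\cap\bdy\unitdisk=\{\xi_x\}$, taking closures and intersecting with $\bdy\OM$ returns exactly $\{x\}$.

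The substance is the converse. Assuming \eqref{E:smallbigsingle} for all $x$ and $R$, I would argue in three steps. First, if some $\Pi(x)$ contained two distinct points $\xi_1\ne\xi_2$, then for small $R$ the horodiscs $H(\xi_1,R)$ and $H(\xi_2,R)$ are disjoint, whence $H^s_o(x,R)=\varnothing$ and $\clos{H^s_o(x,R)}\cap\bdy\OM=\varnothing\ne\{x\}$; thus every $\Pi(x)=\{\xi_x\}$ is a singleton, which forces the cluster sets $\{C(\psi,\xi)\}_\xi$ to be pairwise disjoint. Second, with $\Pi(x)=\{\xi_x\}$ one checks that $\clos{H^s_o(x,R)}\cap\bdy\OM$ equals the \emph{horocyclic} cluster set, i.e.\ the set of limits $\lim\psi(\eta_n)$ with $\eta_n\to\xi_x$ inside $H(\xi_x,R)$; so \eqref{E:smallbigsingle} asserts this set is $\{x\}$ for every $R$. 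Third, and decisively, suppose a cluster set $C(\psi,\xi_0)$ contained two distinct points $x\ne y$; by disjointness $\xi_x=\xi_y=\xi_0$, and the second step applied at $x$ and at $y$ would force the single horocyclic cluster set through $H(\xi_0,R)$ to equal both $\{x\}$ and $\{y\}$, a contradiction. Hence every cluster set is a single point, so $\psi$ extends continuously to $\clos{\unitdisk}$; the singleton property of $\Pi$ makes the boundary extension injective, and a continuous bijection from the compact $\clos{\unitdisk}$ onto $\clos{\OM}$ is a homeomorphism, i.e.\ $\OM$ is Jordan.

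I expect the main obstacle to be the rigorous identification in the first paragraph of the one-sided boundary limits with suprema and infima over $\Pi(x)$, together with the matching fact in the second step that $\clos{H^s_o(x,R)}\cap\bdy\OM$ records only the horocyclic approach to $\xi_x$. This is exactly where the distinction between the principal point and the full impression of a prime end enters: the horosphere condition detects non-Jordan behaviour precisely because a non-principal point of a nondegenerate impression fails to lie in the horocyclic cluster set, so such an $x$ cannot satisfy $x\in\clos{H^s_o(x,R)}$. Care is also needed to confirm that $\Pi(x)$ is closed and nonempty, that $\psi^{-1}$ is proper, and that the relevant $\max$ and $\min$ are attained, all of which rest on compactness of $\clos{\unitdisk}$ and the continuity of $B_\zeta$.
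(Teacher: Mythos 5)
Your argument is correct, and it takes a genuinely different route from the paper. The paper deduces the implication \eqref{E:smallbigsingle}$\Rightarrow$Jordan from the general extension theorem for biholomorphisms (Theorem~\ref{thm:contextbiholo}): nonemptiness of the small horospheres plus metric regularity of $\unitdisk$ forces $\psi^{-1}$ to extend continuously (Lemma~\ref{lm:contextprop}), and the separation of big horospheres at distinct boundary points of $\OM$ then forces $\psi$ itself to extend. You instead stay entirely inside one-variable function theory: since the Busemann-type function $B_\zeta(\eta)=\koba_\unitdisk(\zeta,\eta)-\koba_\unitdisk(0,\eta)$ extends continuously to $\clos{\unitdisk}$ and $\Pi(x)$ is exactly the set of subsequential limits of $\psi^{-1}(w_n)$ for $w_n\to x$, your identification of $\limsup$ and $\liminf$ with $\max$ and $\min$ over the compact nonempty set $\Pi(x)$ is rigorous, and the resulting description of $\psi^{-1}(H^s_o(x,R))$ and $\psi^{-1}(H^b_o(x,R))$ as the intersection, respectively union, of the explicit horodiscs $H(\xi,R)$ over $\xi\in\Pi(x)$ is a clean structural statement that the paper does not make. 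The three-step converse (singletons $\Pi(x)$ from disjointness of tangent horodiscs, the horocyclic-cluster-set identification, and the contradiction when some $C(\psi,\xi_0)$ contains two points, since then $H^s_o(x,R)=\psi(H(\xi_0,R))=H^s_o(y,R)$ would have boundary trace equal to both $\{x\}$ and $\{y\}$) is sound, and boundary injectivity of the extension follows again from $\Pi(x)$ being a singleton. What the paper's approach buys is generality\,---\,the same mechanism works for biholomorphisms between higher-dimensional domains with the appropriate horosphere hypotheses\,---\,whereas your approach buys an elementary, self-contained planar proof with an explicit prime-end-flavoured picture of the horospheres; you also spell out the easy direction (Jordan $\Rightarrow$ \eqref{E:smallbigsingle}), which the paper's written proof leaves implicit.
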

\noindent An upshot of the above theorem is that for bounded planar simply-connected domains condition~\ref{E:smallbigsingle} above
implies that the small horospheres and big horospheres with the same parameters coincide. Theorem~\ref{thm:jordandomain}
is a consequence of one of the main results of this paper, namely Theorem~\ref{thm:contextbiholo}, regarding the homeomorphic 
extension of a biholomorphism between bounded domains.
This later result gives another reason of studying domains having
condition~\ref{E:smallbigsingle}.
\smallskip

In higher dimensions, Abate first proved that for a bounded strongly pseudoconvex domain
property \eqref{E:smallbigsingle} holds true; see \cite[Theorem~2.4.14]{Abate:iteration89}. 
In fact, using Lempert’s theory of complex geodesics \cite{LL:1981}, Abate also showed that if the boundary of a bounded 
strongly convex domain is at least $\mathcal{C}^3$, then, for each $z\in\OM$, the limit 
\begin{equation}\label{E:limhoro}
 \lim_{w\to x}\big(\koba_\OM(z, w)-\koba_\OM(o, w)\big)
 \end{equation}
exists and thus as a consequence big and small horospheres coincide (see \cite{Abate:scd90}).
\smallskip

To understand the existence of limit in \eqref{E:limhoro}, Arosio et al. in \cite{AFSG:2024} considered the
{\em horofunction compactification} $\clos{\OM}^{H}$ and the {\em horofunction boundary} $\bdy_H\OM$. 
These notions were introduced by Gromov in 1981 \cite{Gromov:1981}. Later, Marc Rieffel rediscovered this compactification as the 
maximal ideal space of a unital cummutative $C^{*}$-algebra. He refers to it as the {\em metric compactification} and
the corresponding boundary as the {\em metric boundary}; see \cite[Section~4]{Rieffel:2002}. From this later point of view, 
for a bounded domain $\OM$, the existence of the 
limit in \eqref{E:limhoro} is equivalent to the existence of a continuous map from $\clos{\OM}$ to $\clos{\OM}^{H}$ 
extending the identity map on $\OM$. Working in this setting, Arosio et al. proved that if $\OM$ is either
\begin{itemize}
\item a bounded strongly pseudoconvex domain with $\mathcal{C}^2$-boundary in $\C^d$,
\item a bounded convex domain such that $s_\OM(z)\to1$ as $z\to\bdy\OM$, 
\item a bounded convex domain in $\C^d$ with $\mathcal{C}^{\infty}$-boundary, of finite type in the sense of D’Angelo, 
\end{itemize}
then the limit in \eqref{E:limhoro} exists. Here, $s_\OM(z)$ denotes the squeezing function of the domain; see \cite{DGZ:2012} for the 
definition and basic properties. 
\smallskip

The above three types of domains are {\em model-Gromov-hyperbolic domains}, i.e., $(\OM, \koba_\OM)$ is Gromov 
hyperbolic and the identity map on $\OM$ extends as a homeomorphism from the Gromov compactification $\clos{\OM}^{G}$
to $\clos{\OM}$; this follows from the works of Balogh--Bonk \cite{BB:2000}, Zimmer \cite{{Zimmer:sub2022}, {Zimmer:cdft2016}}, 
together with the results in \cite{BGZ:2021}. Arosio et al. observed that in the 
above three cases the distance space $(\OM, \koba_\OM)$ has an additional property, namely, the {\em approaching geodesic property}. 
This property gives a sufficient condition under which the identity map on $\OM$ extends as a homeomorphism from 
$\clos{\OM}^{G}$ to $\clos{\OM}^{H}$, i.e., any two of the three compactifications $\clos{\OM}$, 
$\clos{\OM}^{G}$, $\clos{\OM}^{H}$ admits homeomorphisms extending the identity map.
\smallskip
 
Recently, Bharali--Zimmer \cite[Theorem~1.11]{BZ:2023} showed that a 
model-Gromov-hyperbolic domain is Cauchy complete and satisfies
visibility property with respect to the geodesics of the Kobayashi distance,
see \ref{ss:visibility} for the definition of this property. 
In this paper, we show that
\begin{proposition}\label{P:visi_horo}
Let $\OM$ be a complete Kobayashi hyperbolic domain that satisfies the visibility property with respect to the geodesics 
of the Kobayashi distance. Then for any $o\in\OM$, $x\in\bdy\OM$ and $R>0$ we have 
\[ 
  \overline{H_o^b(x, R)}\cap\bdy{\OM}=\{x\}. 
          \]
\end{proposition}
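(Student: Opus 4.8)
The plan is to establish the two inclusions $\{x\}\subseteq\overline{H_o^b(x, R)}\cap\bdy\OM$ and $\overline{H_o^b(x, R)}\cap\bdy\OM\subseteq\{x\}$ separately. Throughout I would use that Cauchy completeness of $\koba_\OM$ makes $(\OM,\koba_\OM)$ a proper geodesic space: geodesic segments joining any two points exist, and $\koba_\OM(o, z_n)\to\infty$ whenever $z_n\to\bdy\OM$. The latter properness statement and the visibility hypothesis are the only two external facts I shall invoke.

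For the inclusion $\{x\}\subseteq\overline{H_o^b(x,R)}$, I would exhibit points of the big horosphere converging to $x$ along a geodesic ray. Starting from geodesic segments $\sigma_k$ joining $o$ to a sequence $w_k\to x$ (say $\sigma_k(0)=o$, $\sigma_k(\ell_k)=w_k$, $\ell_k\to\infty$) and extracting an Arzel\`a--Ascoli limit, one obtains a geodesic ray $\gamma$ from $o$. Visibility forces $\gamma$ to land exactly at $x$: were its limit some $y\neq x$, then with $K$ the visibility compact set for the pair $y,x$ and $M:=\sup_{p\in K}\koba_\OM(o,p)$, taking $t_0>M$ and noting that for large $k$ the geodesic $\sigma_k$ joins $\sigma_k(t_0)$ (near $y$) to $w_k$ (near $x$), visibility would place some $\sigma_k(r_k)\in K$ with $r_k\geq t_0>M$, i.e. $\koba_\OM(o,\sigma_k(r_k))=r_k>M$, contradicting $\sigma_k(r_k)\in K$. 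Along $\gamma$ one then computes $\koba_\OM(\gamma(t),\gamma(s))-\koba_\OM(o,\gamma(s))=-t$ for $s>t$, so the defining $\liminf$ at $\gamma(t)$ is at most $-t<(1/2)\log R$ for $t$ large; hence $\gamma(t)\in H_o^b(x,R)$ and $\gamma(t)\to x$.

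The heart of the proof is the reverse inclusion, which I would argue by contradiction. Suppose $y\in\overline{H_o^b(x,R)}\cap\bdy\OM$ with $y\neq x$, and pick $z_n\in H_o^b(x,R)$ with $z_n\to y$. Since the defining $\liminf$ at each $z_n$ is $<(1/2)\log R$, a diagonal choice yields $w_n\to x$ with $\koba_\OM(z_n,w_n)-\koba_\OM(o,w_n)<(1/2)\log R$. As $z_n\to y$ and $w_n\to x$ approach distinct boundary points, visibility provides a compact $K\subset\OM$ met by every geodesic $\gamma_n$ joining $z_n$ to $w_n$; choosing $p_n\in\gamma_n\cap K$ and setting $M:=\sup_{p\in K}\koba_\OM(o,p)$, geodesic additivity $\koba_\OM(z_n,w_n)=\koba_\OM(z_n,p_n)+\koba_\OM(p_n,w_n)$ together with $\koba_\OM(p_n,w_n)\geq\koba_\OM(o,w_n)-M$ converts the displayed bound into $\koba_\OM(z_n,p_n)<(1/2)\log R+M$. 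But $z_n\to y\in\bdy\OM$ forces $\koba_\OM(z_n,p_n)\geq\koba_\OM(o,z_n)-M\to\infty$, a contradiction; hence $y=x$.

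The main obstacle lies in the reverse inclusion, specifically in deploying visibility correctly: one must first extract, via the diagonal choice, a single sequence $w_n\to x$ realizing the $\liminf$ condition simultaneously with $z_n\to y$, so that the two sequences genuinely approach distinct boundary points. The decisive gain is then the cancellation of the common divergent term $\koba_\OM(o,w_n)$, made possible precisely by trapping the geodesics $\gamma_n$ in the fixed compact set $K$; this cancellation is what turns the horosphere bound into the false assertion that $\koba_\OM(z_n,p_n)$ remains bounded while $z_n\to\bdy\OM$. The landing of the ray at $x$ in the forward inclusion is the other technical point, but it is a routine consequence of the same interplay between visibility and properness.
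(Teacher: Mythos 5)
Your proof is correct and follows essentially the same route as the paper: the forward inclusion comes from points far along a geodesic ray from $o$ landing at $x$, and the reverse inclusion from trapping geodesics joining $z_n$ (near $y$) to $w_n$ (near $x$) in the visibility compact $K$ so that the divergent term $\koba_\OM(o, w_n)$ cancels\,---\,your explicit splitting $\koba_\OM(z_n, w_n)=\koba_\OM(z_n, p_n)+\koba_\OM(p_n, w_n)$ is precisely the standard derivation of the bounded--Gromov-product estimate $\limsup_{z\to y,\,w\to x}\langle z\,|\,w\rangle^{\OM}_o<\infty$ that the paper instead invokes from the literature. The one point to watch is that for \emph{unbounded} $\OM$ your Arzel\`a--Ascoli limit ray could a priori escape to infinity rather than land at $x$ (your landing argument only excludes finite cluster points $y\neq x$); the paper covers this case by citing the end-compactification version of the landing lemma.
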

\noindent As an upshot of the above result and the existence of limit in \eqref{E:limhoro}, for the three
classes of domains considered by Arosio et al., property \eqref{E:smallbigsingle} in the problem $(*)$ holds true. However, whether 
\eqref{E:smallbigsingle} holds true 
for a {\em general} model-Gromov-hyperbolic domain is not known. In this article,
we answer this question affirmatively. Before we state this result, a notation: since we would also like to consider unbounded 
domains, the natural formulation of model-Gromov-hyperbolic domains is via the end-point compactification $\clos{\OM}^{End}$.
For bounded 
domains $\clos{\OM}^{End}=\clos{\OM}$; see \cite{CGMS:2024} for more details about end-point compactification. 
\begin{theorem}\label{thm:gromovmodel}
Let $\OM$ be a Kobayashi hyperbolic domain and suppose $(\OM, \koba_\OM)$ is Gromov hyperbolic metric space. If
the identity map ${\sf id}_\OM$ extends to a homeomorphism from $\clos{\OM}^{G}$ onto $\clos{\OM}^{End}$ then
for any $x\in\bdy{\OM}$, $o\in\OM$ and $R>0$, we have: 
\[
  \clos{H^s_o(x, R)}\cap\bdy\OM= \clos{H^b_o(x, R)}\cap\bdy\OM=\{x\}.
    \]
      \end{theorem}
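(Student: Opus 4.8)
The plan is to prove the two equalities separately, reducing both to estimates on the Gromov product and exploiting the homeomorphism $\Phi\colon\clos{\OM}^{G}\lrarw\clos{\OM}^{End}$ extending ${\sf id}_\OM$. Fix $x\in\bdy\OM$, $o\in\OM$, $R>0$, and let $\xi:=\Phi^{-1}(x)\in\bdy_G\OM$ be the Gromov-boundary point corresponding to $x$; the decisive consequence of the hypothesis is that a sequence $(w_n)\subset\OM$ satisfies $w_n\lrarw x$ in $\clos{\OM}^{End}$ if and only if $w_n\lrarw\xi$ in $\clos{\OM}^{G}$. Throughout I write $(y|w)_o:=\tfrac12\big(\koba_\OM(o,y)+\koba_\OM(o,w)-\koba_\OM(y,w)\big)$ for the Gromov product and let $\delta\geq 0$ be a hyperbolicity constant of $(\OM,\koba_\OM)$.

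For the big horosphere I would first record that a model-Gromov-hyperbolic domain is Cauchy complete and satisfies the visibility property with respect to $\koba_\OM$-geodesics; in the bounded case this is exactly Bharali--Zimmer \cite[Theorem~1.11]{BZ:2023}, and in the present (possibly unbounded) endpoint-compactification setting the same argument applies. Granting this, Proposition~\ref{P:visi_horo} gives at once $\clos{H^b_o(x,R)}\cap\bdy\OM=\{x\}$. A self-contained alternative avoiding visibility is to argue directly: if $z_n\in H^b_o(x,R)$ with $z_n\lrarw y\in\bdy\OM$, then writing $\koba_\OM(z_n,w)-\koba_\OM(o,w)=\koba_\OM(o,z_n)-2(z_n|w)_o$, the defining inequality reads $\koba_\OM(o,z_n)-2\limsup_{w\to x}(z_n|w)_o<\tfrac12\log R$; since $z_n\lrarw\eta:=\Phi^{-1}(y)\neq\xi$ in $\clos{\OM}^{G}$ forces $\koba_\OM(o,z_n)\lrarw\infty$ while $\limsup_{w\to\xi}(z_n|w)_o$ stays within a bounded multiple of $\delta$ of $(\eta|\xi)_o<\infty$, the inequality fails for large $n$, a contradiction; hence $y=x$.

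Because $\limsup\geq\liminf$, we have $H^s_o(x,R)\subseteq H^b_o(x,R)$ and therefore $\clos{H^s_o(x,R)}\cap\bdy\OM\subseteq\{x\}$; thus the entire content of the small-horosphere equality is the inclusion $x\in\clos{H^s_o(x,R)}$. To prove it I would use completeness to realize $(\OM,\koba_\OM)$ as a proper geodesic space (Hopf--Rinow) and choose a geodesic ray $\gamma\colon[0,\infty)\lrarw\OM$ with $\gamma(0)=o$ and $\gamma(t)\lrarw\xi$ in $\clos{\OM}^{G}$, so that $\gamma(t)\lrarw x$ in $\clos{\OM}^{End}$. For $z=\gamma(t)$ the identity $\koba_\OM(\gamma(t),w)-\koba_\OM(o,w)=t-2(\gamma(t)|w)_o$ yields
\begin{equation*}
\limsup_{w\to x}\big(\koba_\OM(\gamma(t),w)-\koba_\OM(o,w)\big)=t-2\liminf_{w\to\xi}(\gamma(t)|w)_o.
\end{equation*}
Since $\gamma$ is a geodesic ray to $\xi$, the standard $\delta$-hyperbolic estimate $\liminf_{w\to\xi}(\gamma(t)|w)_o\geq t-2\delta$ holds, so the left-hand side is at most $-t+4\delta$. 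Choosing $t$ large (say $t>4\delta-\tfrac12\log R$) makes this strictly less than $\tfrac12\log R$, whence $\gamma(t)\in H^s_o(x,R)$; letting $t\lrarw\infty$ and using $\gamma(t)\lrarw x$ gives $x\in\clos{H^s_o(x,R)}$. Combined with the second paragraph this completes the proof.

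I expect the main obstacle to be exactly this last inclusion $x\in\clos{H^s_o(x,R)}$: small horospheres may be empty in general complete Kobayashi hyperbolic domains (Example~\ref{exa:smallhoroempty}), so producing points of $H^s_o(x,R)$ that cluster at $x$ genuinely requires both the existence of a $\koba_\OM$-geodesic ray landing at $x$\,---\,which rests on completeness together with the identification of $\xi$ with $x$ through $\Phi$\,---\,and the Gromov-product control forcing $\limsup_{w\to x}\big(\koba_\OM(\gamma(t),w)-\koba_\OM(o,w)\big)\lrarw-\infty$ along the ray. A secondary technical point to check carefully is that $\koba_\OM$-geodesic rays converging to a prescribed Gromov-boundary point exist and that the endpoint and Gromov topologies interchange as asserted; both follow from properness plus the homeomorphism hypothesis, but they must be verified within the unbounded endpoint-compactification framework rather than merely quoted from the bounded theory.
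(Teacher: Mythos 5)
Your proposal is correct, and its overall architecture coincides with the paper's: the big-horosphere equality is obtained exactly as in the paper by quoting \cite[Theorem~1.11]{BZ:2023} together with Proposition~\ref{P:visi_horo}, and the whole content of the small-horosphere equality is reduced to showing that points far along a geodesic ray from $o$ landing at $x$ lie in $H^s_o(x,R)$. Where you genuinely diverge is in how that last estimate is proved. The paper argues by contradiction: assuming a sequence $w_n\to x$ violates the defining inequality at $z_T=\beta(T)$, it extracts (via Arzel\`a--Ascoli) limiting geodesic rays of the segments $[o,w_n]$ and $[z_T,w_n]$, identifies them with $\beta$ in the Gromov boundary, and uses the $2\delta$-fellow-traveling of asymptotic rays to force $\lim_n\big(\koba_\OM(z_T,w_n)-\koba_\OM(o,w_n)\big)\le -T+10\delta$. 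You instead estimate directly: writing $\koba_\OM(\gamma(t),w)-\koba_\OM(o,w)=t-2\langle\gamma(t)\,|\,w\rangle^{\OM}_o$ and using the $\delta$-inequality $\langle\gamma(t)\,|\,w\rangle^{\OM}_o\ge\min\{t,\langle\gamma(s)\,|\,w\rangle^{\OM}_o\}-\delta$ together with the fact (this is precisely where the homeomorphism $\clos{\OM}^{G}\cong\clos{\OM}^{End}$ enters, in both proofs) that \emph{every} sequence $w_n\to x$ converges to the single Gromov point $\xi$, you get $\liminf_{w\to x}\langle\gamma(t)\,|\,w\rangle^{\OM}_o\ge t-2\delta$ and hence the bound $-t+4\delta$. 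Your route is shorter, avoids the compactness extraction of limiting geodesics, and gives a slightly better constant; the paper's route is more self-contained in that it only invokes the fellow-traveling lemma from \cite{BH:1999} rather than the (equally standard, but worth one line of justification) lower bound on the Gromov product along a ray converging to $\xi$. I would only ask you to spell out that one-line justification of $\liminf_{w\to\xi}\langle\gamma(t)\,|\,w\rangle^{\OM}_o\ge t-2\delta$, and to note explicitly that completeness (needed for Hopf--Rinow and for the existence of the ray) is supplied by \cite[Theorem~1.11]{BZ:2023}, exactly as the paper does.
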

\noindent 
Owing to the aforementioned result of Bharali--Zimmer and Proposition~\ref{P:visi_horo}, we know that 
$\clos{H^b_o(x, R)}\cap\bdy\OM=\{x\}$ for every $x\in\bdy\OM$, for a model-Gromov-hyperbolic domain. 
Gromov hyperbolicity plays a crucial role in showing that
$x\in\clos{H^s_o(x, R)}$. 
More specifically, it is the property of uniform separation of 
geodesic rays starting at a fixed point in the domain and landing at the same point in the Gromov boundary that is at the heart of our proof
that $\clos{H^s_o(x, R)}\cap\bdy\OM=\{x\}$. 
There is no complete characterization of a model-Gromov-hyperbolic domain. Except the three
considered by Arosio et al., a few others are also listed in \cite{BGNT:2024}. 
\smallskip

\begin{remark}\label{Rem:altproof}             
We also present an 
alternative proof\,---\,suggested by the referee\,---\,that does not rely on visibility.
This second proof draws upon the horofunction compactification,
the correspondence between the horofunction and Gromov boundaries together with 
the uniform boundedness of horofunctions corresponding to the same Gromov boundary point.
The two proofs emphasize different geometric 
aspects, namely\,---\,visibility versus the horofunction
compactification.
\end{remark}

A natural question arises whether there are non-Gromov-hyperbolic domains 
having property \eqref{E:smallbigsingle} in the problem $(*)$. We provide two 
results in this direction. Our first result deals with domains that are {\em locally model-Gromov-hyperbolic domain}; 
see Section~\ref{S:proofgromovmodel} for the definition. 
\begin{theorem}\label{thm:locmodgrom}
Let $\OM\subset\C^d$ be an complete Kobayashi hyperbolic domain
that is hyperbolically embedded in $\C^d$. Suppose $\OM$ is locally model-Gromov-hyperbolic 
domain. Then for any $x\in\bdy\OM$, $o\in\OM$ and $R>0$, we have 
 \[
    \clos{H^s_o(x, R)}\cap\bdy{\OM}=\clos{H^b_o(x, R)}\cap\bdy{\OM}=\{x\}. 
      \]
 \end{theorem}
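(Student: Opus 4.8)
The plan is to reduce the two required identities to the already-established results by localizing the Kobayashi geometry near each boundary point. The engine is the localization estimate available for hyperbolically embedded domains: for every $p\in\bdy\OM$ and nested neighborhoods $V\Subset U$ of $p$, there is a constant $C=C(p,V,U)\ge 0$ with $\koba_\OM(z,w)\le\koba_{\OM\cap U}(z,w)\le\koba_\OM(z,w)+C$ for all $z,w\in\OM\cap V$, the left inequality being mere monotonicity of the Kobayashi distance under inclusion. By hypothesis each $\OM\cap U$ (for $U$ small) is a model-Gromov-hyperbolic domain, hence, by Bharali--Zimmer, Cauchy complete and a visibility domain, so that Theorem~\ref{thm:gromovmodel} applies on each local piece.

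First I would handle $\clos{H^b_o(x,R)}\cap\bdy\OM=\{x\}$ together with the inclusion $\clos{H^s_o(x,R)}\cap\bdy\OM\subseteq\{x\}$. The idea is to promote local visibility to global visibility: given distinct boundary points $\xi\neq\eta$ and geodesics of $\OM$ with endpoints approaching $\xi$ and $\eta$, if such a family failed to remain in a compact subset of $\OM$ it would accumulate at some $p\in\bdy\OM$; localizing at $p$ and using the estimate above to regard the corresponding sub-arcs as almost-geodesics of $\OM\cap U_p$ would contradict visibility of the local piece, the completeness of $\OM$ supplying the compactness needed to extract limits. Once $\OM$ is known to be a visibility domain, Proposition~\ref{P:visi_horo} gives $\clos{H^b_o(x,R)}\cap\bdy\OM=\{x\}$ outright, and since $H^s_o(x,R)\subseteq H^b_o(x,R)$ we immediately obtain $\clos{H^s_o(x,R)}\cap\bdy\OM\subseteq\{x\}$.

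It remains to prove $x\in\clos{H^s_o(x,R)}$; this is where Gromov hyperbolicity of the local model is indispensable, exactly as in Theorem~\ref{thm:gromovmodel}. Fix a neighborhood $U$ of $x$ with $\OM\cap U$ model-Gromov-hyperbolic, fix an auxiliary pole $o'\in\OM\cap V$, and apply Theorem~\ref{thm:gromovmodel} to $\OM\cap U$ at the boundary point $x$: for every radius the closed small horosphere of $\OM\cap U$ meets $\bdy(\OM\cap U)$ only at $x$. Choosing radii $1/n\downarrow 0$, this yields points $z_n\to x$ whose local small horofunction $\limsup_{w\to x}\big(\koba_{\OM\cap U}(z_n,w)-\koba_{\OM\cap U}(o',w)\big)$ tends to $-\infty$. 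I then transfer this to $\OM$ by the localization estimate: for $z,w,o'\in\OM\cap V$, combining $\koba_\OM(z,w)\le\koba_{\OM\cap U}(z,w)$ with $\koba_\OM(o,w)\ge\koba_{\OM\cap U}(o',w)-C-\koba_\OM(o,o')$ gives
\[
\limsup_{w\to x}\big(\koba_\OM(z,w)-\koba_\OM(o,w)\big)\le\limsup_{w\to x}\big(\koba_{\OM\cap U}(z,w)-\koba_{\OM\cap U}(o',w)\big)+C+\koba_\OM(o,o').
\]
Applied along the sequence $z_n$, the right-hand side tends to $-\infty$, so the global small horofunction of $z_n$ is eventually below $\tfrac12\log R$ for the given $R$; hence $z_n\in H^s_o(x,R)$ for all large $n$, and since $z_n\to x$ we conclude $x\in\clos{H^s_o(x,R)}$. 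The additive constants $C$ and $\koba_\OM(o,o')$, coming from localization and the change of pole, are harmless precisely because the local horofunction is driven to $-\infty$.

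The main obstacle is the global visibility step: promoting visibility of the local pieces $\OM\cap U_p$ to visibility of $\OM$ requires controlling the sub-arcs of $\OM$-geodesics that dip near an auxiliary boundary point $p\neq\xi,\eta$\,---\,in particular identifying the two distinct boundary points of $\OM\cap U_p$ that the endpoints of such a sub-arc approach, so that local visibility can be invoked\,---\,and it is here that completeness and hyperbolic embeddedness (for the comparison of the two Kobayashi distances) are used in full. The transfer of horofunctions in the last step, by contrast, is routine once the localization estimate is in place.
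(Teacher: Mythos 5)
Your proposal is correct and follows essentially the same route as the paper: establish global visibility by promoting the local visibility of each model-Gromov-hyperbolic piece (the paper simply cites Result~\ref{res:locvis-globvis} for this, where you sketch it), invoke Proposition~\ref{P:visi_horo} for the big-horosphere identity, and then pull $x\in\clos{H^s_o(x,R)}$ back from Theorem~\ref{thm:gromovmodel} applied to $\OM\cap U_x$ via the localization estimate of Result~\ref{res:localization}. The only cosmetic differences are that you drive the local horofunction to $-\infty$ and absorb the constants $C+\koba_\OM(o,o')$ directly, while the paper keeps a fixed radius, records the inclusion $W_x\cap H^s_p(x,R,V_x)\subset H^s_p(x,Re^{2C},\OM)$, and changes the pole via Result~\ref{res:horo_prop}(6); note only that the localization inequality is a consequence of the (already established) weak visibility of $\OM$, not of hyperbolic embeddedness alone.
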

\noindent We refer the reader to Subsection~\ref{ss:visibility} for the 
definition of a domain being hyperbolically  embedded in $\C^d$. 
A recent result of Bracci et al. \cite{BGNT:2024} says that any bounded local model-Gromov-hyperbolic 
domain is itself a model-Gromov-hyperbolic domain. 
In Section~\ref{S:examples}, we shall provide a class of planar domains,
introduced in \cite{CGMS:2024}, that are
locally model-Gromov-hyperbolic, however, in general, they 
are not Gromov hyperbolic domains. 
\smallskip

The other class of domains that satisfy condition~\eqref{E:smallbigsingle} and provides examples of non-Gromov-hyperbolic {\em bounded} domains 
are modelled locally on bounded convex domains with Dini-smooth boundary, see \cite{NA:2017} for
the definition. To state the result we need a definition.  
Let $\OM$ be a domain in $\C^d$. We say $\OM$ is {\em locally convex} if for 
any point $x\in\bdy\OM$, there exist $\C^d$-neighbourhoods $U_x$, $V_x$ of $x$ such that 
$V_x\Subset U_x$, and an injective holomorphic map $\Phi_x:U_x\lrarw\C^d$ such that 
$\Phi_x(\OM\cap V_x)$ is an open convex set. 
\begin{theorem}\label{thm:loccondini}
Let $\OM$ be a complete Kobayashi hyperbolic bounded domain with Dini-smooth boundary.
Suppose $\OM$ is locally convex. 
Moreover, suppose for any point $x\in\bdy\OM$, the domain $\Phi_x(V_x\cap\OM)$ has
visibility property with respect to the geodesics, where $(\Phi_x, U_x, V_x)$ be as in the definition of local convexity.
Then for any $x\in\OM$, $o\in\OM$ and $R>0$, we have 
 \[
   \clos{H^s_o(x, R)}\cap\bdy{\OM}=\clos{H^b_o(x, R)}\cap\bdy{\OM}=\{x\}. 
     \]
       \end{theorem}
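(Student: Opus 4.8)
The plan is to prove the two inclusions separately, using that $H^s_o(x,R)\subseteq H^b_o(x,R)$ (the $\liminf$ never exceeds the $\limsup$), so that $\clos{H^s_o(x,R)}\subseteq\clos{H^b_o(x,R)}$. Hence it suffices to establish: \textbf{(I)} $x\in\clos{H^s_o(x,R)}$; and \textbf{(II)} no boundary point other than $x$ lies in $\clos{H^b_o(x,R)}$. Combining these gives $\{x\}\subseteq\clos{H^s}\cap\bdy\OM\subseteq\clos{H^b}\cap\bdy\OM\subseteq\{x\}$, i.e.\ the asserted equalities.

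The engine for (II) is Proposition~\ref{P:visi_horo}, which already yields $\clos{H^b_o(x,R)}\cap\bdy\OM=\{x\}$ as soon as $\OM$ \emph{itself} is a (global) visibility domain. So the principal task is to upgrade the hypothesised \emph{local} visibility of the convex models $\Phi_y(V_y\cap\OM)$ to \emph{global} visibility of $\OM$. Dini-smoothness and local convexity enter through a localization estimate for the Kobayashi distance: local convexity supplies local holomorphic peak functions (via supporting hyperplanes), and together with Dini-smoothness one obtains, for $z,w$ in a fixed smaller neighbourhood $V'_y\Subset V_y$ of any $y\in\bdy\OM$, an inequality $\koba_{V_y\cap\OM}(z,w)\le\koba_\OM(z,w)+C_y$, the reverse being automatic from monotonicity. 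Conjugating by $\Phi_y$, this says that the $\koba_\OM$-geodesics which remain near $y$ are, up to a bounded additive error, geodesics of the convex model $\Phi_y(V_y\cap\OM)$; combined with that model's visibility, it gives local visibility of $\OM$ at each $y\in\bdy\OM$.

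The main obstacle—and the crux of the whole argument—is the passage from this local visibility to global visibility, which I would carry out by the standard no-escape argument. Suppose $\OM$ were not visible between two distinct $\xi,\eta\in\bdy\OM$: there are $z_n\to\xi$, $w_n\to\eta$ whose connecting geodesics $\sigma_n$ leave every compact subset of $\OM$, so after passing to a subsequence some $\sigma_n(t_n)\to p\in\bdy\OM$. Localizing at $p$ and examining the crossings of $\sigma_n$ with $\bdy V'_p$, one transports the offending subsegments through $\Phi_p$ into the convex model; their deepest points converge to $\Phi_p(p)$ while their endpoints accumulate at points of $\clos{\Phi_p(V_p\cap\OM)}$ lying a fixed distance from $\Phi_p(p)$, and the visibility of the model for these endpoint data contradicts the penetration of the (approximate) geodesic all the way to $\Phi_p(p)$. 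This is the delicate point, since the crossing points need not lie in a compact subset of $\OM$; it is exactly the model's visibility, rather than mere completeness, that rules out the deep penetration. Hence $\OM$ is a visibility domain and Proposition~\ref{P:visi_horo} gives (II), in particular $\clos{H^s_o(x,R)}\cap\bdy\OM\subseteq\{x\}$.

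Finally, for (I), completeness together with the visibility just established produces a geodesic ray $\gamma\colon[0,\infty)\lrarw\OM$ with $\gamma(0)=o$ and $\gamma(t)\to x$. I would then show that for each fixed $t$, $\limsup_{w\to x}\bigl(\koba_\OM(\gamma(t),w)-\koba_\OM(o,w)\bigr)\le -t$. This is where the local convex Dini-smooth structure at $x$ is used a second time: since $x$ is a Dini-smooth (hence, in the convex model, $C^1$) boundary point, the geodesic ray to $x$ is unique up to fellow-travelling, so every geodesic $[o,w]$ with $w\to x$ runs alongside $\gamma$ up to level $t$; writing $z_w$ for the point of $[o,w]$ with $\koba_\OM(o,z_w)=t$, one has $\koba_\OM(z_w,w)=\koba_\OM(o,w)-t$ and $\koba_\OM(\gamma(t),z_w)\to0$, whence $\koba_\OM(\gamma(t),w)-\koba_\OM(o,w)\le\koba_\OM(\gamma(t),z_w)-t\to -t$. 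Consequently $\gamma(t)\in H^s_o(x,R)$ once $t$ is large (for any fixed $R>0$), and since $\gamma(t)\to x$ we obtain $x\in\clos{H^s_o(x,R)}$. Together with (II) this yields $\clos{H^s_o(x,R)}\cap\bdy\OM=\clos{H^b_o(x,R)}\cap\bdy\OM=\{x\}$.
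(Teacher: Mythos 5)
Your reduction to the two inclusions and your treatment of \textbf{(II)} essentially match the paper: there, too, one first upgrades the hypothesised visibility of the convex models $\Phi_x(V_x\cap\OM)$ to global visibility of $\OM$ (the paper simply observes that bounded domains are hyperbolically embedded and cites the local-to-global visibility theorem, Result~\ref{res:locvis-globvis}, instead of running the no-escape argument by hand) and then applies Proposition~\ref{P:visi_horo}.

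The genuine gap is in \textbf{(I)}. Your key step is the claim that, because $x$ is a Dini-smooth boundary point, ``the geodesic ray to $x$ is unique up to fellow-travelling,'' so that the point $z_w$ of $[o,w]$ at parameter $t$ satisfies $\koba_\OM(\gamma(t),z_w)\to 0$ as $w\to x$. That is a geodesic-stability statement of Gromov-hyperbolic type, and the domains this theorem is designed for are precisely \emph{not} Gromov hyperbolic (e.g.\ smoothly bounded convex domains of finite type away from a finite exceptional set, which Zimmer's theorem excludes from Gromov hyperbolicity). Visibility does guarantee that limits of the segments $[o,w_n]$ land at $x$, but it gives no uniqueness of the limiting ray and no bound on $\koba_\OM(\gamma(t),z_{w_n})$; without $\delta$-thin triangles nothing forces $z_{w_n}$ back towards $\gamma(t)$. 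Note that the inequality you are after, $\limsup_{w\to x}\bigl(\koba_\OM(\gamma(t),w)-\koba_\OM(o,w)\bigr)\le -t$, is exactly the hard half of the problem: the corresponding $\liminf$ statement is easy and only yields $x\in\clos{H^b_o(x,R)}$ (cf.\ Lemma~\ref{lm:clusgeodray}), and in Theorem~\ref{thm:gromovmodel} the $\limsup$ bound is obtained precisely by invoking $\delta$-hyperbolicity. For the present theorem the paper takes a different, more explicit route: Sarkar's localization (Result~\ref{res:localization}) compares $\koba_\OM$ with $\koba_{V_x\cap\OM}$ up to an additive constant near $x$, reducing matters to the bounded convex model $\Phi_x(V_x\cap\OM)$, where Lemma~\ref{lm:dinismooth} proves $\Phi_x(x)\in\clos{H^s(\Phi_x(x),R)}$ by combining Mercer's lower bound $\koba_\OM(o,w)\ge C+\tfrac12\log\bigl(1/\delta_\OM(w)\bigr)$ with the Nikolov--Andreev upper estimate at Dini-smooth points, evaluated along the inner normal at $x$. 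You would need to replace the fellow-travelling step by an argument of this explicit kind for your proof of (I) to go through.
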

\noindent A bounded convex domain with smooth boundary having all the points in its boundary\,---\,except a nonempty finite subset\,---\,of 
finite D'Angelo type is a visibility domain, see \cite[Theorem~1.1]{BNT:2022}, \cite[Corollary~1.4]{CMS:2024}. 
So the above theorem applies to it. Note that by a result of 
Zimmer \cite{Zimmer:cdft2016}, such a domain is not Gromov hyperbolic. Bharali--Zimmer \cite{BZ:2017}
also constructed bounded convex domains\,---\,that are
not Gromov hyperbolic\,---\,and has visibility property. The above theorem applies to such domains too. 
A recent result of Bracci--Nikolov--Thomas says that any complete Kobayashi hyperbolic bounded 
domain with Dini-smooth boundary such that all its boundary points are $\C$-strictly convex is a visibility domain;
see \cite[Theorem~1.3]{BNT:2022}. Theorem~\ref{thm:loccondini} applies to all such domains too. 
\smallskip

The domains in Theorem~\ref{thm:gromovmodel}, Theorem~\ref{thm:locmodgrom} and 
Theorem~\ref{thm:loccondini} are all visible domains. 
It is natural to ask if the visibility property itself implies the 
property \eqref{E:smallbigsingle} in problem $(*)$. This is not true, e.g., 
the domain  $\OM:=\unitdisk\setminus\{[0, 1)\}$ is a visibility domain
but for any $x\in (0, 1]$, there exists an $R_0$, depending on $x$, such that for any $R<R_0$, we have 
\[
  H^s_o(x, R)=\emptyset \ \ \forall R<R_0
  \]
and for some $o\in\OM$; see Example~\ref{exa:smallhoroempty} for the details. 
\smallskip

We now present our result regarding the continuous extension
of biholomorphisms. 
To present our results, we define 
a class of `nice domains'. 
Given a complete Kobayashi hyperbolic domain $D$, let $\xi\in\bdy{D}$ and $o\in D$ be such that 
for any $z\in D$ the limit 
\[
  \lim_{w\to\xi}\big(\koba_D(z, w)-\koba_D(o, w)\big) \ \ \text{exists}. 
 \]
 Then, for all $R>0$, we define the {\em horosphere} centered at $\xi$ of radius $R$ and pole $o$ as
 \[
   H_{o}(\xi, R):=\big\{z\in D\,:\,\lim_{w\to\xi}\big(\koba_D(z, w)-\koba_D(o, w)\big)<(1/2)\log R\big\}.
   \]
When the context is clear, we shall write $H_{o}(\xi, R)$ simply as $H(\xi, R)$. 

\begin{definition}\label{D:metreg}
A complete Kobayashi hyperbolic domain $D$ is said to be {\em metrically-regular} if there exists a point 
$o\in D$ such that 
\begin{enumerate}
\item for any $\xi\in\bdy D$ and $z\in D$, the limit 
\[
  \lim_{w\to\xi}\big(\koba_D(z, w)-\koba_D(o, w)\big) \ \ \text{exists and}
 \]

\item given two distinct points $\xi_1\neq \xi_2\in\bdy{D}$, there exists an $R>0$ such that
\[ 
  H(\xi_1, R)\cap H(\xi_2, R)=\emptyset.
  \]
\end{enumerate}
\end{definition}
\noindent The Euclidean unit ball $\mathbb{B}^d\subset\C^d$, $d\geq 1$, is metrically-regular as one explicitly
knows the formulae for the Kobayashi distance. On the other hand, $\unitdisk^d$, $d\geq 2$, is not
metrically-regular. The three types of domains considered by Arosio et al. are metrically-regular. 
\begin{remark}\label{Rm:metreg}
It is evident that when $\OM$ is bounded, the metric regularity of $\OM$
implies that the identity map on $\OM$ extends as a homeomorphism
from its horofunction compactification $\clos{\OM}^H$ 
to its Euclidean closure $\clos{\OM}$. 
\end{remark}
\noindent We are now ready to present our main result. 
\begin{theorem}\label{thm:contextbiholo}
Let $D$, $\OM$ be bounded domains
that are complete with respect to the Kobayashi distance. 
Suppose $D$ is metrically-regular and $\OM$ has the property that
\begin{itemize}
\item[$a)$] for each $x\in\bdy\OM$ and for every $R>0$, $H^s_p(x, R)$ is nonempty
with respect to some (and hence any) pole $p$, and 
\smallskip

\item[$b)$] for any two distinct points $x, y\in\bdy{\OM}$ there is an $R>0$ such that  $H^b_p(x, R)\neq H^b_p(y, R)$. 
\end{itemize}
Then any biholomorphism $\Psi:D\lrarw\OM$ extends as a homeomorphism 
from $\clos{D}$ onto $\clos{\OM}$. 
\end{theorem}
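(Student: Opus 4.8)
The plan is to use that a biholomorphism $\Psi:D\lrarw\OM$ is an isometry for the Kobayashi distances and to transport the horosphere structure through this isometry. Fix the pole $o\in D$ witnessing the metric-regularity of $D$ and set $p:=\Psi(o)$. For $\xi\in\bdy D$ the limit $h_\xi(z):=\lim_{w\to\xi}\big(\koba_D(z,w)-\koba_D(o,w)\big)$ exists by Definition~\ref{D:metreg}(1), and $H(\xi,R)=\{z\in D:h_\xi(z)<(1/2)\log R\}$. Everything rests on the isometry identity $\koba_D(z,w)-\koba_D(o,w)=\koba_\OM(\Psi z,\Psi w)-\koba_\OM(p,\Psi w)$, which lets me read off limits in $\OM$ from the (genuine) horofunctions of $D$.

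Since $\Psi$ is proper, any sequence $w'_k\to y\in\bdy\OM$ forces $\Psi^{-1}(w'_k)\to\bdy D$, so the \emph{preimage set} $S(y):=\{\xi\in\bdy D:\exists\,w'_k\to y\text{ with }\Psi^{-1}(w'_k)\to\xi\}$ is a nonempty compact subset of $\bdy D$. The technical heart of the argument is a characterization lemma: combining the isometry identity with the existence of the limits $h_\xi$ and the compactness of $\clos D$, the set of subsequential limits of $\koba_\OM(z',w')-\koba_\OM(p,w')$ as $w'\to y$ is exactly $\{h_\xi(\Psi^{-1}z'):\xi\in S(y)\}$. Hence $\limsup_{w'\to y}(\cdots)=\sup_{\xi\in S(y)}h_\xi(\Psi^{-1}z')$ and $\liminf_{w'\to y}(\cdots)=\inf_{\xi\in S(y)}h_\xi(\Psi^{-1}z')$, which translate into $H^s_p(y,R)=\Psi\big(\{z\in D:\sup_{\xi\in S(y)}h_\xi(z)<(1/2)\log R\}\big)$ and $H^b_p(y,R)=\Psi\big(\bigcup_{\xi\in S(y)}H(\xi,R)\big)$.

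The crux\,---\,and the step I expect to be the main obstacle\,---\,is showing that $S(y)$ is a single point, since this is exactly where both hypotheses enter. Suppose $\xi_1\neq\xi_2$ both lie in $S(y)$. By the metric-regularity clause Definition~\ref{D:metreg}(2) there is an $R>0$ with $H(\xi_1,R)\cap H(\xi_2,R)=\emptyset$. For this $R$, if $z'\in H^s_p(y,R)$ then, writing $z=\Psi^{-1}z'$, each $h_{\xi_i}(z)$ is a subsequential limit and so $h_{\xi_i}(z)\le\limsup_{w'\to y}(\cdots)<(1/2)\log R$ for $i=1,2$; thus $z\in H(\xi_1,R)\cap H(\xi_2,R)=\emptyset$, which is impossible. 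Hence $H^s_p(y,R)=\emptyset$, contradicting hypothesis $a)$. Therefore $S(y)=\{G(y)\}$ is a singleton, and a routine diagonal-sequence argument using compactness shows $\Psi^{-1}(w')\to G(y)$ as $w'\to y$, with $G$ continuous; consequently $\Psi^{-1}$ extends to a continuous map $\clos\OM\lrarw\clos D$.

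Once $S(y)$ is a single point, the two characterizations above collapse to $H^s_p(y,R)=H^b_p(y,R)=\Psi(H(G(y),R))$ for all $R$. Hypothesis $b)$ then forces $G$ to be injective: if $G(y_1)=G(y_2)$ then $H^b_p(y_1,R)=H^b_p(y_2,R)$ for every $R$, so $y_1=y_2$. Surjectivity of $G$ onto $\bdy D$ is immediate, for any $\xi\in\bdy D$ is $G(y)$ whenever $y$ is a subsequential limit of $\Psi(w_n)$ with $w_n\to\xi$. Thus $G:\bdy\OM\lrarw\bdy D$ is a continuous bijection of compact Hausdorff spaces, hence a homeomorphism; its inverse furnishes the boundary values of $\Psi$, yielding a continuous extension $\clos D\lrarw\clos\OM$ that is inverse to the extension of $\Psi^{-1}$. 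As a continuous bijection between the compact spaces $\clos D$ and $\clos\OM$, this extension is a homeomorphism, which completes the proof.
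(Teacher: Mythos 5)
Your proof is correct and follows essentially the same route as the paper: the isometry identity transports the horosphere data, hypothesis $(a)$ together with metric-regularity $(2)$ forces the boundary cluster set $S(y)$ of $\Psi^{-1}$ to be a singleton (this is the paper's Lemma~\ref{lm:contextprop}), and the resulting identity $H^b_p(y,R)=\Psi(H(G(y),R))$ combined with hypothesis $(b)$ yields injectivity of the boundary map. The only cosmetic difference is that you package the conclusion as ``a continuous bijection of compact Hausdorff spaces is a homeomorphism,'' whereas the paper argues by contradiction on the non-existence of boundary limits of $\Psi$; the underlying inclusions between horospheres are identical.
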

 \begin{remark}\label{rm:applicability}
Note that any bounded domain that satisfies condition~\eqref{E:smallbigsingle} in problem $(*)$ 
satisfies the condition $(a)$ and $(b)$ imposed on the domain $\OM$ in the statement of 
the above theorem. In particular, taking $D=\unitdisk$ and $\OM$ to be a bounded simply-connected 
domain in $\C$ that satisfies \eqref{E:smallbigsingle} we recover Theorem~\ref{thm:jordandomain}.
Theorem~\ref{thm:contextbiholo} also implies that bounded simply-connected domains that are metrically-regular are 
the Jordan domains.
\end{remark}

\section{Preliminaries}\label{S:prelim}
\subsection{Basic properties of horospheres.}
We begin this section with a list of important properties of the horospheres due to Abate, see 
[Section~2.4.2]\cite{Abate:iteration89}. 
In what follows, $B_\koba(o, r)$ will denote the ball with respect to the Kobayashi distance centered at $o$ of radius $r>0$.
\begin{result}[paraphrasing Lemma~2.4.10 in \cite{Abate:iteration89}]\label{res:horo_prop}
Let $\Omega$ be a Kobayashi hyperbolic domain. Then for any $o, p\in\OM$, $x\in\bdy{\OM}$, we have
\begin{enumerate}
\item for every $R>0$ we have $H^s_o(x, R)\subset H^b_o(x, R)$;
\smallskip

\item for every $0<R_1<R_2$, $H^s_o(x, R_1)\subset H^s_o(x, R_2)$ and 
$H^b_o(x, R_1)\subset H^b_o(x, R_2)$; 
\smallskip

\item for every $R>1$, we have $B_\koba(o, (1/2)\log R)\subset H^s_o(x, R)$; 
\smallskip

\item for every $R<1$ we have $H^b_o(x, R)\cap B_\koba(o, (-1/2)\log R)=\emptyset$;
\smallskip

\item $\bigcup_{R>0}H^s_o(x, R)=\bigcup_{R>0}H^b_o(x, R)=\OM$ and
$\bigcap_{R>0}H^s_o(x, R)=\bigcap_{R>0}H^b_o(x, R)=\emptyset$.
\smallskip

\item for any $R>0$, we have ${H_p^b(x, R)}\subset {H_o^b(x, LR)}$, 
${H_p^s(x, R)}\subset {H_o^s(x, LR)}$ where 
\[
({1}/{2})\log L:=\limsup_{w\to x}\big(\koba_\OM(p, w)-\koba_\OM(o, w)\big).
\]

\end{enumerate}
\end{result}
\noindent When there are more than one domains under consideration, we shall use $H^s_o(x, R, \OM)$, $H^b_o(x, R, \OM)$
to specify the domain in which horospheres are considered. 

\subsection{Results regarding visibility property of a domain}\label{ss:visibility}
In this subsection, we put together certain results regarding 
visibility property of a Kobayashi hyperbolic domain. 
\smallskip

\begin{definition}\label{def:geo-vis}
A complete Kobayashi hyperbolic domain $\OM\subset\C^d$ is
said to possess visibility property with respect to the geodesics
of the Kobayashi distance $\koba_\OM$ if for any two distinct points $x$, $y\in \bdy{\OM}$,
there exist disjoint $\C^d$-neighbourhoods $U_x$, $U_y$ of $x$ and $y$ respectively, 
and a compact subset $K\subset\OM$ 
such that any geodesic that starts in $U_x$ and ends in $U_y$ intersects $K$. 
\end{definition}

In this paper, we shall mostly be concerned with this form of visibility which is designed for complete spaces. 
There are two other notions of visibility, namely weak visibility and visibility that are designed 
considering that, in general, it is difficult to determine whether $(\OM, \koba_\OM)$ is complete. 
The definition of weak visibility is analogous to Definition~\ref{def:geo-vis} except that the geodesics 
are replaced by $(1, \kappa)$-almost geodesics for all $\kappa>0$. For more information, 
we refer the reader to \cite{BZ:2017}, \cite{BNT:2022}, \cite{BZ:2023}. 
For complete hyperbolic domains, we have the following:
\begin{result}[{\cite[Proposition~2.5]{BNT:2022}, \cite[Proposition~3.1]{CMS:2024}}] \label{res:vis-weakvis}
Let $\OM$ be a complete Kobayashi hyperbolic domain in $\C^d$. Then the following are equivalent. 
\begin{itemize}
\item $\OM$ has visibility property with respect to the geodesics. 
\smallskip 

\item $\OM$ has weak visibility property. 
\smallskip

\item For any $o\in\OM$, and for any $x\neq y\in\bdy\OM$ we have
\[
 \limsup_{z\to x,\,w\to y}\langle z\,|\,w\rangle^{\OM}_o<\infty.
 \]
\end{itemize}
\end{result}
\noindent Here, $\langle z\,|\,w\rangle^{\OM}_o$ denotes the Gromov product of $z$ and $w$ with respect to $o$ defined by
\[
 \langle z\,|\,w\rangle^{\OM}_o:=\frac{\koba_\OM(o, z)+\koba_\OM(o, w)-\koba_\OM(z, w)}{2}. 
 \]
The above result was established for bounded domains. However, it can also be 
proved for unbounded domains using a `localization trick' provided 
by \cite[Lemma~2.2]{CGMS:2024} to prove \cite[Theorem~1.3]{CGMS:2024}. 
\smallskip

A Kobayashi hyperbolic domain $\OM\subset\C^d$ is called {\em hyperbolically embedded} in $\C^d$ 
 if for any two distinct points $x\neq y\in\bdy\OM$, we have 
 \[
 \liminf_{z\to x,\,w\to y}\koba_\OM(z, w)>0.
 \]
 It is a fact that all planar domains are hyperbolically embedded in $\C$. A bounded 
 domain $\OM\subset\C^d$ is hyperbolically embedded in $\C^d$. We now state the next result. 
 \begin{result}[{\cite[Theorem~1.6]{CGMS:2024}, \cite[Theorem~1.6]{RM:2024}}]\label{res:locvis-globvis}
 Let $\OM\subset\C^d$ be a Kobayashi hyperbolic domain that is hyperbolically embedded in $\C^d$. 
 Suppose for any $x\in\bdy\OM$, there exists a $\C^d$-neighbourhood $U_x$ of $x$ such that
 $U_x\cap\OM$ is connected and $(U_x\cap\OM, \koba_{U_x\cap\OM})$ has weak visibility 
 property. Then $(\OM, \koba_\OM)$ has weak visibility property. 
 \end{result}
\noindent The last result that we state is a localization result for the Kobayashi distance in weak visibility domains. 
\begin{result}[Corollary of Theorem~1.3 in \cite{Amar:2023}]\label{res:localization}
Let $\OM$ be a Kobayashi hyperbolic domain that has weak visibility 
property. Suppose for any point $x\in\bdy{\OM}$, there exist 
$\C^d$-neighbourhoods $U_x$, $W_x$ of $x$ such that 
$W_x\Subset U_x$ and $\OM\cap U_x$ is connected. Then 
there exists a $C>0$ such that 
\[
 \koba_{U_x\cap\OM}(z, w)\leq \koba_\OM(z,w)+C, \ \ \forall z, w\in 
 W_x\cap\OM.
\]
\end{result}

\section{General results about horospheres}\label{s:genresulthoro}
In this section, we present some general results about horospheres. At the end of this section, 
we shall present the proof of Proposition~\ref{P:visi_horo}. 
We begin with an elementary lemma.
\begin{lemma}\label{lm:horos_empty}
Let $\OM$ be a hyperbolic domain and let $o\in\OM$ and $x\in\bdy{\OM}$ be given.
Suppose that
\[ 
M:= \limsup_{w\to x} \koba_\OM(o, w)<\infty.
 \]
Then for any $0<R<e^{-2M}$, we have $H^s_o(x, R)\subset H^b_o(x, R)=\emptyset$. 
\end{lemma}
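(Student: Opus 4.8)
The plan is to handle the two assertions separately, in order. The inclusion $H^s_o(x, R)\subset H^b_o(x, R)$ holds for every $R>0$ by Result~\ref{res:horo_prop}(1); alternatively, it is immediate from $\liminf\le\limsup$, since membership $z\in H^s_o(x,R)$ forces the $\limsup$ in the defining condition to be $<(1/2)\log R$, and hence so is the $\liminf$. The entire content of the lemma therefore lies in showing that $H^b_o(x, R)=\emptyset$ whenever $0<R<e^{-2M}$.

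To prove emptiness, I would fix an arbitrary $z\in\OM$ and establish the lower bound
\[
\liminf_{w\to x}\big(\koba_\OM(z, w)-\koba_\OM(o, w)\big)\ge -M.
\]
The key observation is simply nonnegativity of the Kobayashi distance: $\koba_\OM(z, w)\ge 0$ for all $w$, so the pointwise inequality $\koba_\OM(z, w)-\koba_\OM(o, w)\ge -\koba_\OM(o, w)$ holds for every $w\in\OM$. Passing to the $\liminf$ as $w\to x$ and using the elementary identity $\liminf_{w\to x}\big(-\koba_\OM(o,w)\big)=-\limsup_{w\to x}\koba_\OM(o,w)=-M$ then yields the displayed bound.

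Finally I would translate the radius condition: $0<R<e^{-2M}$ is equivalent to $(1/2)\log R<-M$. Combining this with the lower bound above gives
\[
\liminf_{w\to x}\big(\koba_\OM(z, w)-\koba_\OM(o, w)\big)\ge -M>(1/2)\log R,
\]
so $z\notin H^b_o(x, R)$. As $z\in\OM$ was arbitrary, $H^b_o(x, R)=\emptyset$, which completes the proof. There is no genuine obstacle here; the only point requiring a little care is the choice of the crude estimate $\koba_\OM(z,w)\ge 0$ in place of the sharper triangle inequality $\koba_\OM(z,w)\ge\koba_\OM(o,w)-\koba_\OM(o,z)$. The latter would produce only the $z$-dependent constant $-\koba_\OM(o,z)$, which fails to capture the uniform threshold $e^{-2M}$ that makes the statement hold simultaneously for all $z\in\OM$.
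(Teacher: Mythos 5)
Your proposal is correct and follows essentially the same route as the paper's proof: the pointwise bound $\koba_\OM(z,w)-\koba_\OM(o,w)\ge-\koba_\OM(o,w)$, the passage to $\liminf_{w\to x}\ge -M$, and the comparison with $(1/2)\log R<-M$. The only cosmetic difference is that you state the inclusion $H^s_o(x,R)\subset H^b_o(x,R)$ explicitly via Result~\ref{res:horo_prop}(1), which the paper leaves implicit.
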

\begin{proof}
For a fixed $z\in\OM$, we have
$\koba_\OM(z, w)-\koba_\OM(o, w)\geq -\koba_\OM(o, w)$. 
Therefore 
\[
\liminf_{w\to x}\big(\koba_\OM(z, w)-\koba_\OM(o, w)\big)\geq\liminf_{w\to x}\big(-\koba_\OM(o, w)\big)
  =-\limsup_{w\to x}\koba_\OM(o, w)=-M
\]
Now if $z\in H^b_o(x, R)$ for some $R>0$, then the above inequality and the definition of
$H^b_o(x, R)$
implies
\[
M+{1}/{2}\log R>0 \Longleftrightarrow R> e^{-2M}. 
 \]
This implies that for any $R<e^{-2M}$ the big horosphere $H^b_o(x, R)=\emptyset$.
\end{proof}
\noindent The above lemma suggests that to study the geometry of the horospheres with respect to a pole
$o\in\OM$, we should impose on $\OM$ the following condition:
\[
 \limsup_{w\to x}\koba_\OM(o, w)=\infty \ \ \text{$\forall x\in\bdy{\OM}$}.
 \]
Note that every complete Kobayashi hyperbolic domain satisfies this property.
Indeed for such domains we have the following:
\begin{lemma}\label{lm:nonemphoro}
Let $\OM$ be a complete Kobayashi hyperbolic domain in $\C^d$. Then for any $o\in\OM$, $x\in\bdy{\OM}$ and $R>0$, the set 
$H^b_o(x, R)$ is non-empty. Moreover, 
if $\OM$ is bounded, then the set $\clos{H^b_o(x, R)}\cap\bdy{\OM}$ is also non-empty. 
\end{lemma}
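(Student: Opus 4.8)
The plan is to exploit completeness of $\koba_\OM$ in two standard ways: that closed Kobayashi balls $\clos{B_\koba(o,r)}$ are compact in $\OM$ (so $(\OM,\koba_\OM)$ is a proper metric space), and that $o$ is the initial point of a geodesic ray; both follow from the Hopf--Rinow theorem applied to the locally compact length space $(\OM,\koba_\OM)$ and are used freely throughout the paper. Fix $o\in\OM$, $x\in\bdy\OM$, and choose any sequence $w_n\in\OM$ with $w_n\to x$. Properness forces $\koba_\OM(o,w_n)\to\infty$, since otherwise a subsequence would lie in a compact ball $\clos{B_\koba(o,r)}\Subset\OM$ and subconverge to an interior point, contradicting $w_n\to x\in\bdy\OM$. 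Taking unit-speed geodesics $\gamma_n\colon[0,\koba_\OM(o,w_n)]\lrarw\OM$ from $o$ to $w_n$, the $1$-Lipschitz bound together with $\gamma_n(t)\in\clos{B_\koba(o,t)}$ lets an Arzel\`a--Ascoli and diagonal argument extract a subsequence (not relabelled) converging locally uniformly to a geodesic ray $\gamma\colon[0,\infty)\lrarw\OM$ with $\gamma(0)=o$ and $\koba_\OM(o,\gamma(t))=t$.

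The heart of the argument is to show that $\gamma(t)\in H^b_o(x,R)$ for all large $t$. Fix $t>0$ and put $z=\gamma(t)$. Since $\gamma_n$ is a geodesic, $\koba_\OM(\gamma_n(t),w_n)=\koba_\OM(o,w_n)-t$, so the triangle inequality yields $\koba_\OM(z,w_n)-\koba_\OM(o,w_n)\le\koba_\OM(z,\gamma_n(t))-t$. Local uniform convergence and continuity of $\koba_\OM$ give $\koba_\OM(z,\gamma_n(t))\to 0$, whence
\[
\liminf_{w\to x}\big(\koba_\OM(z,w)-\koba_\OM(o,w)\big)\le\liminf_{n}\big(\koba_\OM(z,w_n)-\koba_\OM(o,w_n)\big)\le -t.
\]
Thus any $t>\max\{0,-(1/2)\log R\}$ places $z=\gamma(t)$ in $H^b_o(x,R)$, proving nonemptiness.

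For the ``moreover'' part, assume $\OM$ is bounded. The previous paragraph gives $\gamma(t)\in H^b_o(x,R)$ for every $t>t_0:=\max\{0,-(1/2)\log R\}$. As $t\to\infty$ we have $\koba_\OM(o,\gamma(t))=t\to\infty$, so $\gamma(t)$ eventually leaves every compact subset of $\OM$ (each lies in some $B_\koba(o,r)$ by continuity of $\koba_\OM(o,\bcdot)$); for a bounded domain this forces $\distance(\gamma(t),\bdy\OM)\to 0$. Boundedness then lets me extract $t_k\to\infty$ with $\gamma(t_k)\to\xi$ for some $\xi\in\clos\OM$, and the escape to the boundary forces $\xi\in\bdy\OM$. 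Since $\gamma(t_k)\in H^b_o(x,R)$ for large $k$, we get $\xi\in\clos{H^b_o(x,R)}\cap\bdy\OM$, so this set is nonempty.

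The main obstacle is not any single estimate but the clean construction and handling of the geodesic ray: one must invoke completeness to get a proper, geodesic space, control the limiting ray by Arzel\`a--Ascoli, and transfer the exact geodesic identity $\koba_\OM(\gamma_n(t),w_n)=\koba_\OM(o,w_n)-t$ to the limit through the one-sided triangle-inequality bound. The supporting facts\,---\,the divergence $\koba_\OM(o,w_n)\to\infty$ and the escape of $\gamma(t)$ to $\bdy\OM$\,---\,are routine consequences of properness.
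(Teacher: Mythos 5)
Your proof is correct and follows essentially the same route as the paper: both exploit completeness via Hopf--Rinow to get properness and geodesics, pick the point at distance $t$ (resp.\ $s$) along a geodesic from $o$ to $w_n$, use the additivity $\koba_\OM(o,w_n)=\koba_\OM(o,\gamma_n(t))+\koba_\OM(\gamma_n(t),w_n)$ plus the triangle inequality to get $\liminf_{w\to x}\bigl(\koba_\OM(z,w)-\koba_\OM(o,w)\bigr)\le -t$ at the limit point, and then let the parameter tend to infinity for the boundary statement. The only cosmetic difference is that you assemble a full limiting geodesic ray via Arzel\`a--Ascoli, whereas the paper only extracts, for each fixed radius $s$, a limit $b_s$ of the points $b_n\in\bdy B_\koba(o,s)$ on the geodesics.
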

\begin{proof}
Fix $o\in\OM$, $x\in\bdy{\OM}$ and let $R>0$ be given. 
Choose a sequence $(w_n)\subset\OM$ such that $w_n\to x$ as $n\to\infty$ and, 
for a given $s>0$, consider the ball $B_\koba(o, s)$. 
Since $(\OM, \koba_\OM)$ is a locally compact length metric space that is complete, 
by Hopf--Rinow Theorem, every bounded set is relatively compact. 
Therefore, without loss of generality, we can assume that $w_n\in\OM\setminus B_\koba(o, s)$.
For any $n$, choose $b_n\in\bdy B_\koba(o, s)$ such that 
\begin{equation}\label{E:geod}
    \koba_\OM(o,w_n)=\koba_\OM(o,b_n)+\koba_\OM(b_n, w_n). 
     \end{equation}
By passing to subsequences, we may assume that $b_n\to b_s$ as $n\to\infty$ for some $b_s\in\bdy B_\koba(o, s)$. 
Therefore, given $\epsilon>0$ there exists $n_0\in\nat$ such that 
\begin{equation*}
    \koba_\OM(b_s, b_n)<\epsilon \ \ \ \forall n\geq n_0. 
     \end{equation*}
\textbf{Claim.} $b_s\in H^b_o(x, e^{-s})$.
\smallskip

\noindent Given $\epsilon>0$
the triangle inequality and the inequality above implies
\begin{equation*}
    \begin{split}
        \koba_\OM(b_s,w_n) &\leq\koba_\OM(b_s,b_n)+\koba_\OM(b_n,w_n)\\
        &<\epsilon +\koba_\OM(b_n,w_n)\hspace{3mm} \forall n\geq n_0.
    \end{split}
\end{equation*}
The above inequality and \eqref{E:geod} implies that, for all $n\geq n_0$, we have 
    \[
    \koba_\OM(b_s, w_n)-\koba_\OM(o, w_n)
      <-s+\epsilon. 
    \]
Therefore, for every $\epsilon>0$, we have
\[\liminf_{w\to x}\big(\koba_\OM(b_s, w)-\koba_\OM(o, w)\big)\leq 
\liminf_{n\to\infty}\big(\koba_\OM(b_s, w_n)-\koba_\OM(o, w_n)\big)\leq-s+\epsilon.
 \]
This implies that 
\[
  \liminf_{w\to x}\big(\koba_\OM(b_s, w)-\koba_\OM(o, w)\big)<({1}/{2})\log{e}^{-s}. 
  \]
  whence the claim.\hfill $\blacktriangleleft$
\smallskip
  
It is enough to show that $H^b_o(x, R)$ is non-empty for all $R<1$. Fix such an $R$, then 
for every $s\in(-\log{R}, \infty)$, there exists a point $b_s$ such that $\koba_\OM(o, b_s)=s$ 
and $b_s\in H^b_o(x, e^{-s})\subset H^b_o(x, R)$. This implies that
$H^b_o(x, R)$ is nonempty. If $\OM$ is bounded, since $\lim_{s\to\infty}\koba_\OM(o, b_s)=\infty$, 
the set $\clos{H^b_o(x, R)}\cap\bdy{\OM}$ is nonempty too, for any $R<1$, and hence 
for any $R>0$. 
\end{proof}
The above lemma does not give any information about the points in the set $\overline{H^b(x, R)}\cap\bdy{\OM}$. 
Our next lemma is a result in this direction. 

\begin{lemma}\label{lm:clusgeodray}
Let $\OM$ be a complete Kobayashi hyperbolic domain and let 
$\gamma:[0, \infty)\longrightarrow(\OM, \koba_\OM)$ be a geodesic ray. Suppose $x, y\in\bdy{\OM}$ be such that there 
exist sequences $(t_n)$ and $(s_n)$ such that $t_n\to\infty$, $s_n\to\infty$ and 
\[
  x=\lim_{n\to\infty}\gamma(t_n), \ \ \   y=\lim_{n\to\infty}\gamma(s_n). 
  \]
  Then, for all $R>0$, $o\in\OM$, $y\in\overline{H^b_o(x, R)}$; in particular, taking $s_n=t_n$, $x\in\overline{H^b_o(x, R)}$. 
  \end{lemma}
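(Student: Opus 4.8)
The goal is to show that if a geodesic ray $\gamma$ accumulates at both $x$ and $y$ on the boundary, then $y \in \overline{H^b_o(x,R)}$ for every $R>0$ and pole $o$. The plan is to produce points of $H^b_o(x,R)$ that converge to $y$; the natural candidates are the points $\gamma(s_n)$ themselves, which converge to $y$ by hypothesis. So it suffices to show that $\gamma(s_n) \in H^b_o(x,R)$ for all large $n$, or at least along a subsequence. To do this I would estimate the quantity $\koba_\OM(\gamma(s_n), w) - \koba_\OM(o,w)$ as $w \to x$, and exploit that the same ray also tends to $x$ along the times $t_n$.

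\emph{Key estimate.} Fix $n$ and consider $w = \gamma(t_m)$ for $m$ large. Since $\gamma$ is a geodesic ray, for $t_m > s_n$ we have the exact additivity $\koba_\OM(\gamma(s_n), \gamma(t_m)) = t_m - s_n$ and $\koba_\OM(\gamma(0), \gamma(t_m)) = t_m$. The pole $o$ is arbitrary, so I would compare with $\gamma(0)$: writing $a := \koba_\OM(o, \gamma(0))$, the triangle inequality gives $\koba_\OM(o, \gamma(t_m)) \ge \koba_\OM(\gamma(0), \gamma(t_m)) - a = t_m - a$. Therefore
\[
\koba_\OM(\gamma(s_n), \gamma(t_m)) - \koba_\OM(o, \gamma(t_m)) \le (t_m - s_n) - (t_m - a) = a - s_n.
\]
Since $\gamma(t_m) \to x$ as $m \to \infty$, taking the liminf over $w \to x$ along the sequence $w = \gamma(t_m)$ yields
\[
\liminf_{w \to x}\big(\koba_\OM(\gamma(s_n), w) - \koba_\OM(o, w)\big) \le a - s_n.
\]
Because $s_n \to \infty$, for every fixed $R>0$ there is $N$ so that $a - s_n < (1/2)\log R$ for all $n \ge N$; hence $\gamma(s_n) \in H^b_o(x,R)$ for all such $n$. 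As $\gamma(s_n) \to y$, this shows $y \in \overline{H^b_o(x,R)}$. Taking $s_n = t_n$ gives $x \in \overline{H^b_o(x,R)}$ as the special case.

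The step I expect to require the most care is confirming that the geodesic additivity $\koba_\OM(\gamma(s_n), \gamma(t_m)) = t_m - s_n$ is available in the form I need, i.e. that $\gamma$ parametrized by arc length satisfies $\koba_\OM(\gamma(s), \gamma(t)) = |t - s|$ for all $s, t \ge 0$; this is precisely the definition of a geodesic ray, so it is immediate, but one must be careful to only invoke it when the relevant times are ordered ($t_m > s_n$), which holds for $m$ large since $t_m \to \infty$. The remaining subtlety is purely bookkeeping: I take the liminf over $w \to x$ along the \emph{specific} approach sequence $\gamma(t_m)$, which only increases the value relative to the full liminf, and this is exactly the correct direction for the inequality defining $H^b_o(x,R)$. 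Everything else is the triangle inequality and the divergence of $s_n$.
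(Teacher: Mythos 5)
Your proof is correct, and the central computation is the same one the paper uses: evaluate the defining difference along the ray at times $t_m$ and use geodesic additivity $\koba_\OM(\gamma(s_n),\gamma(t_m))=t_m-s_n$ to force the $\liminf$ down to roughly $-s_n\to-\infty$. Where you diverge is in the bookkeeping around the pole and the logical framing. The paper first proves the claim by contradiction for the special pole $p=\gamma(0)$ (assuming $y\notin\clos{H^b_p(x,R)}$, extracting a uniform lower bound on a neighbourhood of $y$, and deriving $s_k+(1/2)\log R\le 0$), and then transfers to an arbitrary pole $o$ by invoking part (6) of Result~\ref{res:horo_prop}, i.e.\ the inclusion $H^b_p(x,S)\subset H^b_o(x,LS)$ with $(1/2)\log L=\limsup_{w\to x}\big(\koba_\OM(p,w)-\koba_\OM(o,w)\big)$, shrinking $S$ so that $LS<R$. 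You instead absorb the pole change into a single triangle inequality, $\koba_\OM(o,\gamma(t_m))\ge t_m-\koba_\OM(o,\gamma(0))$, which gives the uniform bound $a-s_n$ directly for every pole and lets you argue affirmatively that $\gamma(s_n)\in H^b_o(x,R)$ for large $n$. This is a genuine streamlining: it removes the dependence on the pole-change property of horospheres and avoids the contradiction setup, at no cost in generality. Your two points of care (ordering $t_m>s_n$ before using additivity, and the direction of the inequality when passing from the full $\liminf$ to the $\liminf$ along the specific sequence $\gamma(t_m)$) are exactly the right ones, and both are handled correctly.
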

  \begin{proof}
  Let $p=\gamma(0)$.
  \smallskip
  
  \noindent{\bf Claim.} For all $R>0$, $y\in\overline{H_p^b(x, R)}$. 
  \smallskip
  
  \noindent Suppose $y\notin\overline{H_p^b(x, R)}$ for some $R>0$. Then there exists an $r>0$ such that for any $z\in B(y, r)\cap\OM$
  we have 
  \[ 
   \liminf_{w\to x}\big(\koba_\OM(z, w)-\koba_\OM(p, w)\big)\geq (1/2)\log R.
   \]
    Since $\gamma(s_n)\to y$, there exists $N_0\in\nat$ such that for all $n\geq N_0$, $\gamma(s_n)\in B(y, r)$. Fix a $k\geq N_0$, then as $\gamma(t_n)\to x$, we have
    \begin{align*}
     \liminf_{n\to\infty}\big(\koba_\OM(\gamma(s_k), \gamma(t_n))-\koba_\OM(\gamma(0), \gamma(t_n))\big)&\geq
     \liminf_{w\to x}\big(\koba_\OM(\gamma(s_k), w)-\koba_\OM(\gamma(0), w)\big)\\
     &\geq(1/2)\log R. 
     \end{align*}
     Since $\gamma$ is a geodesic ray, the above inequality implies that
     \[
      s_k+(1/2)\log R\leq 0 \ \ \ \forall k\geq N_0, 
      \]
      which is a contradiction. Hence the claim.\hfill$\blacktriangleleft$
      \smallskip
      
      By Lemma~\ref{res:horo_prop}, part~$(6)$, for all $S>0$, we have ${H_p^b(x, S)}\subset {H_o^b(x, LS)}$
      where $L$ is given by the identity 
       \[
        ({1}/{2})\log L=\limsup_{w\to x}\big(\koba_\OM(p, w)-\koba_\OM(o, w)\big).  
        \]
        Clearly if $L\leq1$, then we are done, otherwise for a given $R$ we choose $S>0$ small enough such that
        $LS<R$, since $y\in\overline{H_p^b(x, S)}$, the inclusion above implies the desired conclusion. 
  \end{proof}
   \noindent Given a geodesic ray $\gamma:[0, \infty)\lrarw(\OM, \koba_\OM)$, consider the {\em cluster set of $\gamma$}
  \[
   \mathcal{C}(\gamma):=\big\{x\in\bdy{\OM}\,:\,\text{$\exists$ a sequence $(t_n)$, $t_n\to\infty$, such that $\gamma(t_n)\to x$}\big\}. 
   \]
   Lemma~\ref{lm:clusgeodray} says that for any $x\in\mathcal{C}(\gamma)$, $\mathcal{C}(\gamma)\subset\overline{H^b_o(x, R)}$
   for all $R>0$. In particular, given an $x\in\bdy{\OM}$, if there is a geodesic ray $\gamma$ such that 
   $x\in\mathcal{C}(\gamma)$ then $x\in\overline{H^b_o(x, R)}$. Whether the later property holds true for a
   general complete Kobayashi hyperbolic domain $\OM$ is not clear. 
   \smallskip

   Lemma~\ref{lm:clusgeodray} also helps in 
   understanding why for the bidisc ${\unitdisk}^2$, given $x=(x_1, x_2)\in\bdy{{\unitdisk}^2}$, and $R>0$,
   $H^b_o(x, R)$, with pole $0$, contains $x$ properly\,---\,as observed
   by Abate by direct computation. To see how, assuming $x_1\in\bdy\unitdisk$, we consider a geodesic ray 
   of ${\unitdisk}^2$ of the form $\gamma:=(\sigma, f\circ\sigma)$, 
   where $\sigma$ is the radial geodesic ray of the unit disc starting at $0\in\unitdisk$ and landing at
   $x_1$ and $f$ is a holomorphic self-map of $\unitdisk$ such that the radial cluster set of $f$ at $x_1$ contains $x_2$ properly. It is a non-trivial fact that such an $f$ exist. Clearly then 
   $\mathcal{C}(\gamma)$ contains $(x_1, x_2)$ properly and then by the above 
   observation $H^b_o(x, R)$ contains
   $(x_1, x_2)$ properly. 
   \smallskip
   
   For Kobayashi hyperbolic convex 
   domains, we show that the closed big horospheres centered at a point contain that point. For this, we need a result due to Zimmer.
   \begin{result}[{paraphrasing Zimmer \cite[Lemma~3.2]{Zimmer:cdft2016}}]\label{Res:con_quasi}
   Let $\OM$ be a hyperbolic convex domain and let $o\in\OM$. Given a boundary point $x\in\bdy{\OM}$, there exists 
   $\alpha\geq 1$ and $\beta\geq 0$ such that the curve $\sigma_x:[0, \infty)\lrarw\Omega$ given by
   \[
     \sigma_x(t)=x+e^{-2t}(o-x)
     \]
     is an $(\alpha, \beta)$-quasi-geodesic. 
     \end{result}
\noindent We now present the result alluded to as above. 
     \begin{proposition}\label{prop:convex}
     Let $\Omega$ be a Kobayashi hyperbolic convex domain and let $x\in\bdy\OM$. 
     Then for any $R>0$ and any $o\in\Omega$, we have
     \[
     x\in\overline{H_o^b(x, R)}.
     \]
     \end{proposition}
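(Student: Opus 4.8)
The plan is to exhibit an explicit sequence of points of $H^b_o(x,R)$ converging to $x$, using Zimmer's quasi-geodesic from Result~\ref{Res:con_quasi}. Write $\sigma_x(t)=x+e^{-2t}(o-x)$, and note $\sigma_x(0)=o$ and $\sigma_x(t)\to x$ as $t\to\infty$. I would show that $\sigma_x(t)\in H^b_o(x,R)$ for all $t$ large enough (depending on $R$); since $\sigma_x(t)\to x$, this immediately yields $x\in\overline{H^b_o(x,R)}$, and as $R>0$ is arbitrary we are done. To test membership I bound the defining $\liminf$ from above by evaluating it along the particular approach sequence $w=\sigma_x(s)$, $s\to\infty$, which tends to $x$:
\[
\liminf_{w\to x}\big(\koba_\Omega(\sigma_x(t),w)-\koba_\Omega(o,w)\big)\le\liminf_{s\to\infty}\big(\koba_\Omega(\sigma_x(t),\sigma_x(s))-\koba_\Omega(\sigma_x(0),\sigma_x(s))\big).
\]

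So matters reduce to two one-sided estimates, each with \emph{multiplicative constant exactly one}: an upper bound $\koba_\Omega(\sigma_x(t),\sigma_x(s))\le(s-t)+C$ for $s\ge t$, and a lower bound $\koba_\Omega(o,\sigma_x(s))\ge s$. For the lower bound I would use convexity through a complex-linear supporting functional: choose $\lambda$ with $\mathrm{Re}\,\lambda<c$ on $\Omega$ and $\mathrm{Re}\,\lambda(x)=c$. Then $\lambda$ maps $\Omega$ into the half-plane $H=\{\zeta\in\C:\mathrm{Re}\,\zeta<c\}$, and the distance-decreasing property together with $c-\mathrm{Re}\,\lambda(\sigma_x(s))=e^{-2s}\big(c-\mathrm{Re}\,\lambda(o)\big)$ gives $\koba_\Omega(o,\sigma_x(s))\ge\koba_H(\lambda(o),\lambda(\sigma_x(s)))\ge\tfrac12\log e^{2s}=s$. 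For the upper bound I would slice by the complex line $\ell$ through $o$ and $x$, so that $\koba_\Omega\le\koba_{\ell\cap\Omega}$ on $\ell\cap\Omega$, a planar convex domain; using the elementary convexity fact that the Euclidean distance of $\sigma_x(\tau)$ to $\bdy\Omega$ is comparable to $e^{-2\tau}$, the sharp (half-plane-type) estimate for the Kobayashi metric of a convex domain bounds the Kobayashi length of $\sigma_x|_{[t,s]}$ by $(s-t)+C$.

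Granting these, the bracketed quantity above is $\le(s-t)+C-s=-t+C$ for every $s>t$, hence the $\liminf$ is $\le-t+C$. Choosing $t>C-\tfrac12\log R$ makes this strictly less than $\tfrac12\log R$, so $\sigma_x(t)\in H^b_o(x,R)$, completing the argument.

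The main obstacle is precisely the slope-one requirement. Result~\ref{Res:con_quasi} as stated only furnishes an $(\alpha,\beta)$-quasi-geodesic with possibly $\alpha>1$, and with $\alpha>1$ the crude bound $\koba_\Omega(\sigma_x(t),\sigma_x(s))-\koba_\Omega(o,\sigma_x(s))\le(\alpha-\alpha^{-1})s-\alpha t+2\beta$ diverges as $s\to\infty$, so the approach along $\sigma_x$ collapses. What saves the argument is that the parametrization $e^{-2t}$ is calibrated so that the comparison constant is genuinely one\,---\,exactly as in the unit-disc model, where $\koba_\unitdisk(0,1-e^{-2t})=t+O(1)$. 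I therefore expect the real work to lie in extracting the two slope-one bounds above (which are implicit in the proof of Result~\ref{Res:con_quasi}) rather than in the horosphere bookkeeping, which is then immediate.
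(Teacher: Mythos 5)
Your overall scheme\,---\,test membership in $H^b_o(x,R)$ along the approach sequence $w=\sigma_x(s)$ and show the resulting difference is $\le -t+C$\,---\,is the right one, and your diagnosis that the crude $(\alpha,\beta)$-quasi-geodesic inequalities do not close the argument when $\alpha>1$ puts your finger on exactly the delicate point. Your lower bound is also correct: the supporting-functional argument does give $\koba_\OM(o,\sigma_x(s))\ge s$ with multiplicative constant one and no additive error. The gap is in the companion upper bound: the estimate $\koba_\OM(\sigma_x(t),\sigma_x(s))\le (s-t)+C$ is \emph{false} for a general Kobayashi hyperbolic convex domain. Take $\OM=\{\rho e^{i\theta}:\rho>0,\ |\theta|<\phi_0\}\subset\C$ with $0<\phi_0<\pi/2$, $x=0$, $o=1$; then $\zeta\mapsto\zeta^{\pi/(2\phi_0)}$ maps $\OM$ onto a half-plane and one computes exactly $\koba_\OM(e^{-2t},e^{-2s})=\tfrac{\pi}{2\phi_0}(s-t)$, whose slope $\tfrac{\pi}{2\phi_0}$ is strictly larger than $1$. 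The ``sharp half-plane-type'' estimate $\kappa_D(\zeta;v)\le |v|/(2\delta_D(\zeta))$ that your slicing step implicitly uses is an identity for half-planes but fails for such sectors; for a general convex slice one only has $\kappa_D(\zeta;v)\le |v|/\delta_D(\zeta)$, which yields slope $2/\delta_D(1)$, not $1$. A slope-one upper bound of the kind you want genuinely requires boundary regularity at $x$\,---\,this is precisely the role of the Dini-smoothness hypothesis and the Nikolov--Andreev estimate in Lemma~\ref{lm:dinismooth}, where the paper proves the stronger small-horosphere statement.

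Note that in the sector example the \emph{difference} $\koba_\OM(\sigma_x(t),\sigma_x(s))-\koba_\OM(o,\sigma_x(s))=-\tfrac{\pi}{2\phi_0}t$ still tends to $-\infty$, because both distances carry the \emph{same} (possibly large) slope along the ray: what must be shown is that the coefficients of $s$ cancel, not that each coefficient equals $1$. The paper's proof runs essentially your computation in contrapositive form and achieves the cancellation by pairing the quasi-geodesic upper bound $\koba_\OM(\sigma_x(t_1),\sigma_x(t_2))\le\alpha(t_2-t_1)+\beta$ with a lower bound $\koba_\OM(o,\sigma_x(t_2))\ge\alpha t_2-\beta$ of the \emph{same} slope $\alpha$, rather than with the slope-one lower bound; observe that this matched-slope lower bound is itself the substantive input, since the two-sided $(\alpha,\beta)$-quasi-geodesic inequalities only give a lower bound of slope $1/\alpha$. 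So the repair is not to force both of your estimates down to slope one, but to justify a matched pair\,---\,equivalently, to show that $\sigma_x$ is, up to an additive constant and an affine reparametrization of $t$, a geodesic ray, or to bound $\koba_\OM(o,\sigma_x(s))-\koba_\OM(\sigma_x(t),\sigma_x(s))$ from below by $t-C$ directly. As written, your two-slope-one-estimates plan contains a step that fails.
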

     \begin{proof} 
      Suppose $x\notin\overline{{H_o^b(x, R)}}$ for some $R$ and $o\in\OM$. Clearly, 
      we can find an $r_0>0$ such that 
      \[
       B(x, r_0)\cap\OM\cap\overline{H_o^b(x, R)}=\emptyset.
       \]
       Therefore, given $\epsilon>0$, for any $z\in B(x, r_0)\cap\OM$, we have 
       \[
         \liminf_{w\to x}\big(\koba_\OM(z, w)-\koba_\OM(o, w)\big)\geq(1/2)\log R>(1/2)\log R-\epsilon. 
         \]
      This implies that given $\epsilon>0$, 
      and $z\in B(x, r_0)\cap\Omega$, we can find a neighbourhood $U(z, \epsilon)$ of $x$ such that
      \[
      \inf_{w\in U(z, \epsilon)}\big(\koba_\OM(z, w)-\koba_\OM(o, w)\big)>({1}/{2})\log R-\epsilon
      \]
      which is equivalent to 
      \begin{equation}\label{E:xnotinhoro}
       \koba_\OM(z, w)-\koba_\OM(o, w)>({1}/{2})\log R-\epsilon \ \ \forall w\in U(z, \epsilon). 
       \end{equation}
       Let $\sigma_x:[0, \infty)\lrarw\Omega$ be an $(\alpha, \beta)$-quasi-geodesic as given by Result~\ref{Res:con_quasi}. 
       In particular, if $z\in{\rm Range}({\sigma_x})\cap B(x, r_0)$ and $w\in{\rm Range}({\sigma_x})\cap U(z, \epsilon)$, 
       the above inequality 
       is satisfied. Let $t_2>>t_1$ be such that $\sigma_x(t_1)\in B(x, r_0)$ and $\sigma_x(t_2)\in U(z, \epsilon)$. Then
       \[
       \koba_\OM(\sigma_x(t_1), \sigma_x(t_2))\leq\alpha(t_2-t_1)+\beta\ \ \text{and} \ \ \koba_\OM(o, \sigma_x(t_2))\geq \alpha t_2-\beta. 
       \]
       This implies that 
       \[
        \koba_\OM(z, w)-\koba_\OM(o, w)\leq 2\beta-\alpha t_1.
        \]
        Therefore, from \eqref{E:xnotinhoro}, it follows that 
        \[
         2\beta-\alpha t_1>({1}/{2})\log R-\epsilon \ \ \text{$\forall t_1$ such that $\sigma_x(t_1)\in B(x, r_0)$}.
         \]
         Since the set of all such $t_1$ is unbounded, we arrive at a contradiction, whence the result. 
         \end{proof}

         We shall now present the proof of Proposition~\ref{P:visi_horo}. In what follows, we shall use the 
         following expression 
           \[
             \koba_\OM(z, w)-\koba_\OM(o, w)=\koba_\OM(o, z)-2\langle z | w \rangle^{\OM}_{o}. 
             \]
        in the defining inequality of the horospheres. We now present:
          \begin{proof}[The proof of Proposition~\ref{P:visi_horo}] 
          Fix $o\in\OM$ and $x\in\bdy{\OM}$. 
          Since $\OM$ is a visibility domain, there exists a geodesic ray $\gamma:[0, \infty)\lrarw\OM$
          that emanates from $o$ and lands at $x$, i.e., 
          \[
            \gamma(0)=o \ \ \ \text{and} \ \ \ \lim_{t\to\infty}\gamma(t)=x.
            \]
            The above result follows from \cite[Lemma~2.16]{CGMS:2024}; for bounded complete hyperbolic domains it was 
            established in \cite[Lemma~3.1]{BNT:2022}. Then by Lemma~\ref{lm:clusgeodray} for any $R>0$, we have 
            $x\in \clos{H_o^b(x, R)}$. 
            \smallskip
         
           Suppose, to get a contradiction, that there exists a $y\in\bdy{\OM}$ such that $y\neq x$ and
           $y\in \overline{H_o^b(x, R)}$ for some $R>0$. By definition, there exists a 
          sequence $(y_n)\subset H_o^b(x, R)$ such that $y_n\to y$ as $n\to\infty$. Note, this implies, for every $y_n$ we have
          \begin{equation}\label{E:seqinhoro}
           \limsup_{w\to x}\big(2\langle y_n | w \rangle^{\OM}_{o}-\koba_\OM(o, y_n)\big)>({-1}/{2})\log R. 
           \end{equation}
           Since $\OM$ is a visibility domain, there exists a positive number $M$ such that 
           \[ 
           \limsup_{z\to y,\,w\to x}\langle z | w \rangle^{\OM}_{o} < M. 
           \]
           The above inequality follows from \cite[Proposition~2.4]{BNT:2022}. 
           This implies there exists $n_0\in\nat$ and an $r>0$ such that
           \[
            \sup_{w\in B(x, r)\cap\OM}\langle y_n | w \rangle^{\OM}_{o}< M \ \ \ \forall n\geq n_0. 
            \]
            Therefore, for all $n\geq n_0$, we have 
            \begin{equation}\label{E:gromfin}
             \sup_{w\in B(x, r)\cap\OM}\big(2\langle y_n | w \rangle^{\OM}_{o}-\koba_\OM(o, y_n)\big)<2M-\koba_\OM(o, y_n). 
             \end{equation}
             From \eqref{E:seqinhoro} and \eqref{E:gromfin} we get that 
             \[
              2M-\koba_\OM(o, y_n)>({-1}/{2})\log R\Longleftrightarrow\koba_\OM(o, y_n)<2M+({1}/{2})\log R \ \ \forall n\geq n_0. 
              \]
              But this is a contradiction to the fact that $\lim_{n\to\infty}\koba_\OM(o, y_n)=\infty$.
              Therefore, no point of $\bdy\OM$
              other than $x$ is in the set $\overline{H_o^b(x, R)}\cap\bdy{\OM}$ whence the result. 
              \end{proof}
              
              \section{The proofs of Theorem~\ref{thm:gromovmodel}, Theorem~\ref{thm:locmodgrom} Theorem~\ref{thm:loccondini}}\label{S:proofgromovmodel}
              \subsection{The proof of Theorem~\ref{thm:gromovmodel}}\label{ss:proofgromovmodel}
              \begin{proof}
              Fix a point $x\in\bdy\OM$ and let $o\in\OM$ and $R<1$ be given. 
              By \cite[Theorem~1.11]{BZ:2023}, we know that $(\OM, \koba_\OM)$ is Cauchy-complete and has visibility property 
              with respect to the geodesics of the Kobayashi distance. Therefore, by Proposition~\ref{P:visi_horo}, we have 
              \[  
               \overline{H_o^s(x, R)}\cap\bdy\OM\subset\overline{H_o^b(x, R)}\cap\bdy{\OM}=\{x\}. 
               \]
              We shall now show that $x\in\overline{H_o^s(x, R)}\cap\bdy\OM$. Using visibility property, choose a geodesic ray $\beta:[0, \infty)\lrarw\OM$
              that starts from the point $o$ and lands at the point 
              $x$, i.e. $\beta(0)=o$, $\lim_{t\to\infty}\beta(t)=x$. Choose $T$ large enough such that 
              \begin{equation}\label{E:Tvalue}
                -T+10\delta<(1/2)\log R,
                \end{equation}
               where $\delta\geq 0$ is such that $(\OM, \koba_\OM)$ is $\delta$-hyperbolic. Consider the point $z_T:=\beta(T)$. 
               \smallskip 
               
               \noindent{\bf Claim.} For any $T$ that satisfies \eqref{E:Tvalue}, $z_T\in H^s_o(x, R)$. 
               \smallskip 
               
               \noindent Fix a $T$ satisfying \eqref{E:Tvalue}, and suppose, to get a contradiction, that the above claim is not true.
               Then we can find a sequence $(w_n)\subset\OM$
               such that $w_n\to x$, as $n\to\infty$, and
               \begin{equation}\label{E:notinhoro} 
                \lim_{n\to\infty}\big(\koba_\OM(z_T, w_n)-\koba_\OM(o, w_n)\big)\geq(1/2)\log R. 
                \end{equation}
                Let $\gamma_n:[0, L_n]\lrarw\OM$ be a geodesic segment joining $o$ and $w_n$ and let 
                $\sigma_n:[0, M_n]\lrarw\OM$ be a geodesic segment joining $z_T$ and $w_n$. Appealing to 
                Ascoli--Arzel\'{a} Theorem, and passing to subsequences, without loss of 
                any generality, we can assume that $\gamma_n$, $\sigma_n$ converges to geodesic rays $\gamma$ and 
                $\sigma$, respectively, locally uniformly on $[0, \infty)$. Since $\OM$ is a visibility domain, 
                by \cite[Lemma~3.1]{BNT:2022}, both $\gamma$ and $\sigma$ 
                land at the point $x$. By our assumption, $x$ represents a unique point in the Gromov 
                boundary $\bdy_G\OM$, therefore both $\gamma$ and $\beta$ represent the same 
                point in the Gromov boundary as represented by $x$. 
                Since $\OM$ is $\delta$-hyperbolic,
                using \cite[Lemma~3.3, Chapter~III.H]{BH:1999}, we have 
                \[
                 \koba_\OM(\gamma(t), \beta(t))\leq 2\delta \ \ \ \forall t\geq 0.
                 \]
                 Consider $\tilde{\beta}:[0, \infty)\lrarw\OM$ defined by $\tilde{\beta}(t):=\beta(t+T)$ for all $t\geq 0$. Note that
                 $\tilde{\beta}$ is a geodesic ray that starts at $z_T$ and lands at $x$. Therefore, by a similar 
                 argument as above, we get
                 \begin{equation}\label{E:sigmabetaT}
                   \koba_\OM(\sigma(t), \beta(t+T))\leq 2\delta \ \ \ \forall t\geq 0.
                   \end{equation}
                   From the above two inequalities we get: 
                   \begin{align}\label{E:gammasigmaT}
                    \koba_\OM(\sigma(t), \gamma(t+T))\leq \koba_\OM(\sigma(t), \beta(t+T))+\koba_\OM(\beta(t+T), \gamma(t+T))
                                                                             \leq 4\delta \ \ \ \forall t\geq 0. 
                                                                             \end{align}
                  Fix a $t_0\geq 0$ and write 
                  \[ 
                   \koba_\OM(z_T, w_n)=\koba_\OM(z_T, \sigma_n(t_0))+\koba_\OM(\sigma_n(t_0), w_n). 
                   \]  
                   Now, for large $n$, we write 
                   \begin{align*} 
                   \koba_\OM(o, w_n)&=\koba_\OM(o, \gamma_n(t_0+T))+\koba_\OM(\gamma_n(t_0+T), w_n)\\
                                                 &\geq\koba_\OM(o, \sigma_n(t_0))-\koba_\OM(\sigma_n(t_0), \gamma_n(t_0+T))+
                                                     \koba_\OM(w_n, \sigma_n(t_0))-\koba_\OM(\sigma_n(t_0), \gamma_n(t_0+T))\\
                                                 &= \koba_\OM(o, \sigma_n(t_0))-2\koba_\OM(\sigma_n(t_0), \gamma_n(t_0+T))+
                                                     \koba_\OM(w_n, \sigma_n(t_0)). 
                 \end{align*}
                 From the last two inequalities, for large enough $n$, we get
                 \[
                  \koba_\OM(z_T, w_n)-\koba_\OM(o, w_n)\leq\koba_\OM(z_T, \sigma_n(t_0))- \koba_\OM(o, \sigma_n(t_0))+ 
                                                                                           2\koba_\OM(\sigma_n(t_0), \gamma_n(t_0+T)).
                                                                                           \]
                 Taking limit, as $n\to\infty$, and using inequality \eqref{E:gammasigmaT}, we get 
                 \begin{align}\label{E:crucial}
                    \lim_{n\to\infty}\big(\koba_\OM(z_T, w_n)-\koba_\OM(o, w_n)\big)&\leq\koba_\OM(z_T, \sigma(t_0))- \koba_\OM(o, \sigma(t_0))+ 
                                                                                                                                2\koba_\OM(\sigma(t_0), \gamma(t_0+T))\nonumber \\
                                                                                                                                &\leq t_0-\koba_\OM(o, \sigma(t_0))+8\delta
                                                                                           \end{align}
                 
                 Appealing to \eqref{E:sigmabetaT}, we can write
                 \begin{align*}
                   \koba_\OM(o, \sigma(t_0))&\geq\koba_\OM(o, \beta(t_0+T))-\koba_\OM(\beta(t_0+T), \sigma(t_0))\\
                                                              & \geq t_0+T-2\delta
                                                              \end{align*}
                 
                 Putting this into \eqref{E:crucial} and by our choice of $T$, we get 
                 \begin{align*} 
                  \lim_{n\to\infty}\big(\koba_\OM(z_T, w_n)-\koba_\OM(o, w_n)\big)&\leq t_0-(t_0+T-2\delta)+8\delta\\
                                                                                                                           &= -T+10\delta\\
                                                                                                                           & <(1/2)\log R,
                                                                                                                           \end{align*}
                   which is a contradiction to the inequality \ref{E:notinhoro} whence the claim.
                   \hfill$\blacktriangleleft$
                   \smallskip
                   
                   Therefore, given $x\in\bdy\OM$, $o\in\OM$ and $R<1$, and a geodesic ray $\beta:[0, \infty)\lrarw\OM$ that starts 
                   from $o$ and lands at $x$, for any $T>10\delta-(1/2)\log R$, the point $z_T=\beta(T)$ belongs to $H^s_o(x, R)$. 
                   In particular, $x\in\clos{H^s_o(x, R)}\cap\bdy\OM$. This establishes the result. 
              \end{proof}
We now present an alternative proof of Theorem~\ref{thm:gromovmodel}\,---\,as
alluded to in Remark~\ref{Rem:altproof}. However, 
we first recall a few definitions, and state two results that are at the heart of this proof. 
\smallskip

Given $(\OM, \koba_\OM)$, a complete Kobayashi hyperbolic domain, that is Gromov hyperbolic, we 
shall denote by $\clos{\OM}^H$, $\bdy\clos{\OM}^H$ the horofunction compactification, and the horofunction
boundary respectively. We refer the reader to \cite[Section~2]{AFSG:2024} for a brief introduction in our setting to 
these objects. Given $\xi\in\bdy\clos{\OM}^H$, and $p\in\OM$, by definition, there exists a unique 
continuous function $h_{p,\,\xi}$, vanishing at $p$, that represents $\xi$.
Given $R>0$, the set
\[
H_p(\xi, R):=\left\{x\in X\,:\,h_{p,\,\xi}(x)<\tfrac{1}{2}\log R\right\}
\]
is called the horoball of radius $R$ centered at $\xi$ with pole $p$. 
It is known fact \cite[Proposition~4.6]{WW:2005} that there exists a unique 
$\Phi:\clos{\OM}^H\lrarw\clos{\OM}^G$ extending the identity 
map continuously on $\OM$. Moreover, we have the following results:
\begin{result}[{paraphrasing \cite[Proposition~4.4]{WW:2005}}]\label{Res:unifboun}
Let $\xi_1$, $\xi_2\in\bdy\clos{\OM}^H$ be such that $\Phi(\xi_1)=\Phi(\xi_2)$.
Then, for any $p\in\OM$,
\[
\big|h_{p,\,\xi_1}(z)-h_{p,\,\xi_2}(z)\big|\leq 2\delta\quad \text{for all}\, z\in\OM, 
\]
where $\delta\geq 0$ is such that $(\OM, \koba_\OM)$ is $\delta$-hyperbolic.
\end{result}

\begin{result}[{paraphrasing \cite[Proposition~6.5]{AFSG:2024}}]\label{res:unihoroball}
Given $\xi\in\bdy\clos{\OM}^H$, $p\in\OM$, for any $R>0$, 
\[
\clos{H_{p}(\xi, R)}^{G}\cap\bdy_G{\OM}=\{\Phi(\xi)\}.
\]
Here, the closure at the left-hand side is in the Gromov topology. 
\end{result}

\begin{proof}
Fix a point $o\in\OM$, $x\in\bdy\OM$ and let $R>0$ be given. Note 
$x$ represents a unique point in $\bdy_G\OM$. 
We claim that there exists an $R'>0$ such that 
\[
H_o(\xi, R')\subset H^b_o(x, R')\subset H^s_o(x, R)\subset H_o(\xi, R)
\quad \forall\xi\in\Phi^{-1}(x). 
\]
To see the claim, note that it is a fact that for any $S>0$
\[
H^b_o(x, S)=\bigcup_{\xi\in\Phi^{-1}(x)}H_o(\xi, S),
\quad H^s_o(x, S)=\bigcap_{\xi\in\Phi^{-1}(x)}H_o(\xi, S). 
\]
Therefore, we only need to show the second inclusion in the claim above. Let $R'>0$ be such that 
$(1/2)\log{R'}+2\delta\leq {(1/2)}\log{R}$. Let $z\in H^b_o(x, R')$, then
there exists $\xi\in\Phi^{-1}(x)$ such that $z\in H_o(\xi, R')$.
Now if $\zeta\in\Phi^{-1}(x)$ be 
any other point, then using Result~\ref{Res:unifboun} we get 
\[
h_{o,\,\zeta}(z)\leq 2\delta + h_{o,\,\xi}(z)< 2\delta + (1/2)\log R'\leq (1/2)\log R. 
\]
Therefore, for any $\zeta\in\Phi^{-1}(x)$, we have $z\in H_p(\zeta, R)$
whence $H^b_o(x, R')\subset H^s_o(x, R)$. This establishes the claim. 
\smallskip

The claim above together with Result~\ref{res:unihoroball} implies that 
$\clos{H^s_o(x, R)}^{G}\cap\bdy_G\OM=\{x\}$. For big horospheres, one could change the 
role of $R, R'$ in the claim above, and then the similar argument works. 
\end{proof}

              \subsection{The proof of Theorem~\ref{thm:locmodgrom}}\label{ss:proof-loc-mod-grom}
              We shall now present the proof of Theorem~\ref{thm:locmodgrom}, but first we recall:
              given $\OM$, a complete Kobayashi hyperbolic domain, we say that $\OM$ is a local model-Gromov-hyperbolic 
              domain if for any point $x\in\bdy\OM$ there is a neighbourhood $U_x$ in $\C^d$
              such that $\OM\cap U_x$ is connected 
              and $(\OM\cap U_x, \koba_{\OM\cap U_x})$ is a model-Gromov-hyperbolic domain. 
              \begin{proof}[The proof of Theorem~\ref{thm:locmodgrom}]
             Fix an $x\in\bdy{\OM}$, then there exists a neighbourhood $U_x$ of $x$
             in $\C^d$ such that $(\OM\cap U_x, \koba_{\OM\cap U_x})$ is a model-Gromov-hyperbolic domain. 
             Denote by $V_x:=U_x\cap\OM$. 
             By \cite[Theorem~1.11]{BZ:2023}, it follows that $V_x$ has the weak visibility property 
             with respect to $\koba_{V_x}$. Note that since $x\in\bdy\OM$ is arbitrary, and $\OM$ is hyperbolically
             embedded in $\C^d$, appealing to
             Result~\ref{res:locvis-globvis}, we get that $\OM$ is a weak visibility domain. 
             Since $\OM$ is complete Kobayashi hyperbolic, by Result~\ref{res:vis-weakvis}, $\OM$ also 
             has visibility property with respect to the geodesics 
             of the Kobayashi distance. Therefore, by Proposition~\ref{P:visi_horo}, we have 
              \[  
               \overline{H_o^s(x, R)}\cap\bdy\OM\subset\overline{H_o^b(x, R)}\cap\bdy{\OM}=\{x\}. 
               \]
               We shall now show that $x\in \overline{H_o^s(x, R)}\cap\bdy\OM$ for all $R>0$. 
               Given $x\in\bdy\OM$, we choose $r$ small enough such that $B(x, r)\Subset U_x$.
               Let $W_x:=B(x, r)\cap\OM$. 
                Since $\OM$ is a visibility domain, by Result~\ref{res:localization}, there exists a $C>0$ 
                such that 
                \[
                 \koba_\OM(z, w)\leq\koba_{V_x}(z, w)\leq\koba_\OM(z, w)+C \ \ \ \forall z, w\in W_x. 
                 \]
                 Fix a point $p\in W_x$, then for a point $z\in W_x$, we have
                 \[
                  \koba_\OM(z, w)-\koba_\OM(p, w)\leq\koba_{V_x}(z, w)-\koba_{V_x}(p, w)+C, \ \ \ \forall w\in W_x. 
                  \]
                  The above inequality then implies that for $z, p\in W_x$ we have
                  \[
                    \limsup_{w\to x}\big(\koba_\OM(z, w)-\koba_\OM(p, w)\big)
                      \leq\limsup_{w\to x}\big(\koba_{V_x}(z, w)-\koba_{V_x}(p, w)\big)+C. 
                      \]
                      Therefore, it follows that 
                      \begin{equation}\label{E:lochoro-globhoro}
                       W_x\cap H^s_p(x, R, {V_x})\subset H^s_p(x, Re^{2C}, \OM),
                       \end{equation}
                       where $p\in W_x$ is fixed. Now, since $({V_x}, \koba_{V_x})$ is a model-Gromov-hyperbolic domain
                       by Theorem~\ref{thm:gromovmodel} we have 
                       \[ 
                        x\in\clos{H^s_p(x, R, {V_x})} \ \ \ \forall R>0. 
                        \]
                        Therefore, it follows from \eqref{E:lochoro-globhoro} that for all $R>0$, we have
                        \[
                         x\in\clos{H^s_p(x, Re^{2C}, \OM)}. 
                         \]
                         Since $R$ is arbitrary, it follows that $x\in H^s_p(x, R)$ for all $R>0$ and then arguing as at the end of the proof of Lemma~\ref{lm:clusgeodray}, we have $x\in H^s_o(x, R)$ for all $R>0$, $o\in\OM$ and $x\in\bdy{\OM}$. 
                         This establishes the result. 
                         \end{proof}
                               
\subsection{The proof of Theorem~\ref{thm:loccondini}}\label{ss:dinismooth}
In this subsection, we shall present the proof of Theorem~\ref{thm:loccondini} but first we need a lemma.
\begin{lemma}\label{lm:dinismooth}
Let $\OM$ be a bounded convex domain and let $x\in\bdy\OM$ be a Dini-smooth boundary point. 
Then for any $R>0$ and $o\in\OM$ we have
\[
 x \in \clos{H^s_o(x, R)}.
  \]
  \end{lemma}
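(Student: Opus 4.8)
The plan is to exhibit points of the small horosphere that converge to $x$; the natural source of such points is Zimmer's radial almost-geodesic. By Result~\ref{Res:con_quasi}, applied to the convex domain $\OM$, the curve $\sigma_x(t)=x+e^{-2t}(o-x)$ is an $(\alpha,\beta)$-quasi-geodesic with $\sigma_x(0)=o$ and $\sigma_x(t)\to x$ as $t\to\infty$. Put $z_T:=\sigma_x(T)$. Since $z_T\to x$, it suffices to prove that for each fixed $R>0$ one has $z_T\in H^s_o(x,R)$ for all sufficiently large $T$, for then $x\in\clos{H^s_o(x,R)}$.

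To test membership I use the identity $\koba_\OM(z,w)-\koba_\OM(o,w)=\koba_\OM(o,z)-2\langle z\,|\,w\rangle^{\OM}_{o}$ recorded before the proof of Proposition~\ref{P:visi_horo}. With $z=z_T$ this turns the defining condition of $H^s_o(x,R)$ into
\[
 \koba_\OM(o,z_T)-2\liminf_{w\to x}\langle z_T\,|\,w\rangle^{\OM}_{o}<(1/2)\log R,
\]
so everything reduces to a lower bound, uniform in $T$, for $\liminf_{w\to x}\langle z_T\,|\,w\rangle^{\OM}_{o}$. Writing the Gromov product out, $\langle z_T\,|\,w\rangle^{\OM}_{o}=\koba_\OM(o,z_T)-\tfrac12 D_T(w)$, where
\[
 D_T(w):=\koba_\OM(o,z_T)+\koba_\OM(z_T,w)-\koba_\OM(o,w)\geq 0
\]
is the triangle-inequality defect; thus the task is to show that $D_T(w)$ stays bounded as $w\to x$, by a constant independent of $T$. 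I also record that $\delta_\OM(z_T)\asymp e^{-2T}$ near the smooth point $x$, so comparison of $\koba_\OM$ with the Kobayashi distance of the supporting half-space at $x$ gives $\koba_\OM(o,z_T)\geq T-C_1$, while the quasi-geodesic bound gives $\koba_\OM(o,z_T)\leq\alpha T+\beta$.

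The bound on the defect is the heart of the matter, and it is where the Dini-smoothness of $\bdy\OM$ at $x$ is used. Geometrically $D_T(w)\le C_2$ says that $z_T$ lies within bounded Kobayashi distance of a geodesic joining $o$ to $w$, uniformly for $w$ near $x$. To obtain this I would localize at $x$ and invoke the sharp boundary estimates for the Kobayashi distance of a convex domain at a Dini-smooth boundary point (Nikolov--Andreev \cite{NA:2017}): up to a fixed additive constant, $\koba_\OM$ near $x$ is governed by the normal (half-plane) direction, the complex-tangential directions contributing only boundedly, while convexity keeps the segments $[o,w]$ inside $\OM$ and forces them to accumulate on the segment $[o,x]$ that carries $\sigma_x$. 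Feeding these estimates into $D_T(w)$ controls the normal contribution by a telescoping of the half-plane distance and the tangential contribution by the uniform additive constant, yielding $D_T(w)\le C_2$ with $C_2$ independent of $T$ and of the direction in which $w\to x$.

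Granting this, the conclusion is immediate: $\liminf_{w\to x}\langle z_T\,|\,w\rangle^{\OM}_{o}\ge\koba_\OM(o,z_T)-\tfrac12 C_2$, hence
\[
 \koba_\OM(o,z_T)-2\liminf_{w\to x}\langle z_T\,|\,w\rangle^{\OM}_{o}\le-\koba_\OM(o,z_T)+C_2\le-(T-C_1)+C_2,
\]
which is $<(1/2)\log R$ once $T$ is large. Thus $z_T\in H^s_o(x,R)$ for large $T$, and letting $T\to\infty$ gives $x\in\clos{H^s_o(x,R)}$. The main obstacle is precisely the uniform defect estimate of the previous paragraph: the small horosphere is defined by a $\limsup$, so the control of $D_T(w)$ must hold as $w\to x$ along \emph{every} approach, whereas the big-horosphere statement of Proposition~\ref{prop:convex} needed only the single approach $w=\sigma_x(t)$ and therefore used convexity alone. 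It is exactly this worst-direction uniformity that Dini-smoothness of $\bdy\OM$ at $x$ supplies.
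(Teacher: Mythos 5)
Your overall strategy is sound, and you have correctly identified both the right test points (points tending to $x$ along the inner normal, here packaged as Zimmer's quasi-geodesic) and the right analytic inputs (the Nikolov--Andreev upper estimate at a Dini-smooth boundary point together with a lower bound for $\koba_\OM(o,w)$). But the decisive step---the bound $D_T(w)\le C_2$ on the triangle defect, uniform as $w\to x$---is asserted rather than proved, and the explicit bounds you actually record are too weak to yield it. Unwinding your identities, $D_T(w)\le C_2$ requires an upper bound $\koba_\OM(o,z_T)\le \frac12\log(1/\delta_\OM(z_T))+O(1)=T+O(1)$: combining the estimate of \cite{NA:2017}, $\koba_\OM(z_T,w)\le\log\bigl(1+c\|z_T-w\|/\sqrt{\delta_\OM(z_T)\delta_\OM(w)}\bigr)$, with the lower bound $\koba_\OM(o,w)\ge C+\frac12\log(1/\delta_\OM(w))$ from \cite{Mercer:1993} gives $D_T(w)\le\koba_\OM(o,z_T)-T+O(1)$ in the limit $w\to x$. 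The only upper bound you state, $\koba_\OM(o,z_T)\le\alpha T+\beta$ from Result~\ref{Res:con_quasi}, therefore yields only $D_T(w)\le(\alpha-1)T+O(1)$, which is unbounded in $T$ when $\alpha>1$. The sharp bound $\koba_\OM(o,z_T)\le T+O(1)$ does hold (again by Nikolov--Andreev plus the triangle inequality), but it is precisely the content hidden behind ``telescoping of the half-plane distance,'' so the heart of the argument is missing. By contrast, the uniformity over directions of approach $w\to x$ that you single out as the main obstacle is a non-issue: both cited estimates already hold uniformly for $w\in B(x,\epsilon)\cap\OM$.

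For comparison, the paper's proof dispenses with the quasi-geodesic and the Gromov product entirely. It adds the two estimates directly to obtain, for fixed $z$ near $x$,
\[
\limsup_{w\to x}\big(\koba_\OM(z,w)-\koba_\OM(o,w)\big)\le\frac12\log\|z-x\|+\frac12\log\frac{\|z-x\|}{\delta_\OM(z)}+O(1),
\]
and then takes $z$ on the inner normal at $x$, where the middle term stays bounded while the first tends to $-\infty$. Note that this needs the defect only to grow strictly more slowly than $\koba_\OM(o,z)$, not to stay bounded, which is why the direct computation suffices. To repair your write-up, replace the heuristic paragraph by this explicit combination of the two estimates.
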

\begin{proof}
Since $\OM$ is a bounded convex domain, 
by \cite[Proposition~2.4]{Mercer:1993}, there exists a $C\in\R$ such that for every $w\in\OM$, we have
\[ 
 \koba_\OM(o, w)\geq C+\frac{1}{2}\log\frac{1}{\delta_\OM(w)},
  \]
where $\delta_\OM(\cdot)$ denotes the boundary distance function. 
Since $x\in\bdy{\OM}$ is a Dini-smooth boundary point, 
by a result of Nikolov--Andreev \cite{NA:2017}, there exists $\epsilon>0$, $c>0$, such that for every
$z, w\in B(x, \epsilon)$, we have 
\[ 
 \koba_\OM(z, w)\leq\log\Bigg(1+\frac{c\|z-w\|}{\sqrt{\delta_\OM(z)\delta_\OM(w)}}\Bigg).
  \]
From the above two inequalities, for any $z, w\in B(x, \epsilon)$ we get
\[
 \koba_\OM(z, w)-\koba_\OM(o, w)\leq\log\Bigg(\sqrt{\delta_\OM(w)}+\frac{c\|z-w\|}
  {\sqrt{\delta_\OM(z)}}\Bigg)-C. 
    \]
Therefore, for a fixed $z\in B(x, \epsilon)$, we have 
\begin{align*}
\limsup_{w\to x}\big(\koba_\OM(z, w)-\koba_\OM(o, w)\big)&\leq\log\Bigg(\frac{c\|z-x\|}
                       {\sqrt{\delta_\OM(z)}}\Bigg)-C\\
                       &=\frac{1}{2}\log{\|z-x\|}+\frac{1}{2}\log\frac{\|z-x\|}{\delta_\OM(z)}+(\log c-C).
                        \end{align*}
                        If we choose $z$ on the normal line at the point $x$, and $z$ is sufficiently close to $x$,
                        then the second term in the inequality above remains 
                        bounded above by a constant. This shows 
                        for any $R<1$, we can find an $s>0$ such that for any point $z$ that lies on the normal line 
                        passing through $x$ with 
                        $\delta_\OM(z)<s$, we have 
                        \[
                         \limsup_{w\to x}\big(\koba_\OM(z, w)-\koba_\OM(o, w)\big)<\frac{1}{2}\log R. 
                         \]
                         From this it follows that for any $R<1$, and hence for any $R>0$, 
                         the set $\clos{H^s_o(x, R)}\cap\bdy{\OM}$
                         contains $x$. This together with our observation at the start establishes the result.   
                   \end{proof}
We now present 
\begin{proof}[The proof of \ref{thm:loccondini}]
Since $\Phi_x(V_x\cap\OM)$ has visibility property with respect to the geodesics and $\Phi_x$ is a biholomorphism, 
it is not difficult to check that 
$V_x\cap\OM$ also has the visibility property with respect to the geodesics of $\koba_{V_x\cap\OM}$.
By Result~\ref{res:vis-weakvis}, $(V_x\cap\OM, \koba_{V_x\cap\OM})$ is a weak visibility domain. 
Since every bounded domain is hyperbolically imbedded in $\C^d$, appealing to Result~\ref{res:locvis-globvis},
we deduce that $(\OM, \koba_\OM)$ is a weak visibility 
domain. Since $\koba_\OM$ is complete, it satisfies visibility property with 
respect to the geodesics. Therefore, by Proposition~\ref{P:visi_horo}, given $o\in\OM$, $x\in\bdy\OM$ 
and $R>0$ we have 
\[
 \clos{H^s_o(x, R)}\cap\bdy{\OM}\subset\clos{H^b_o(x, R)}\cap\bdy{\OM}=\{x\}.
 \]
 Fix a point $x\in\bdy\OM$, to show that $x\in\clos{H^s_o(x, R)}$, consider 
 $(\Phi_x, U_x, V_x)$ as in the definition of local convexity. Then, for very small $r$,
 by the localization result, there exists a $C>0$ such that 
 \[
   \koba_\OM(z, w)\leq\koba_{V_x\cap\OM}(z, w)\leq\koba_\OM(z, w) + C \ \ \ \forall \ z, w\in B(x, r)\cap\OM. 
   \]
   Write $W_x:=B(x, r)\cap\OM$. Fix a point $p\in W_x$, then for a point $z\in W_x$, we have
                 \[
                  \koba_\OM(z, w)-\koba_\OM(p, w)\leq\koba_{V_x\cap\OM}(z, w)-\koba_{V_x\cap\OM}(p, w)+C, \ \ \ \forall w\in W_x. 
                  \]
                  The above inequality then implies that for $z, p\in W_x$ we have
                  \[
                    \limsup_{w\to x}\big(\koba_\OM(z, w)-\koba_\OM(p, w)\big)
                      \leq\limsup_{w\to x}\big(\koba_{V_x\cap\OM}(z, w)-\koba_{V_x\cap\OM}(p, w)\big)+C. 
                      \]
                      Therefore it follows that 
                      \begin{equation}\label{E:loccontglob}
                       W_x\cap H^s_p(x, R, {V_x}\cap\OM)\subset H^s_p(x, Re^{2C}, \OM),
                       \end{equation}
                       where $p\in W_x$ is fixed. Note that $\Phi_x(V_x\cap\OM)$ is a bounded convex domain and $\Phi_x(x)$ is a Dini-smooth boundary point. Therefore, by Lemma~\ref{lm:dinismooth}, 
                       \[
                        \Phi_x(x)\in\clos{H^s_{\Phi_x(p)}(\Phi_x(x), R, \Phi_x({V_x}\cap\OM))}. 
                        \]
                        Since $\Phi_x:V_x\cap\OM\lrarw\Phi_x({V_x}\cap\OM)$ is a biholomorphism
                        that extends as a homeomorphism, we get that 
                        \[ 
                        x\in\clos{H^s_p(x, R, {V_x}\cap\OM)} \ \ \ \forall R>0. 
                        \]
                        Therefore, it follows from \eqref{E:loccontglob} that for all $R>0$, we have
                        \[
                         x\in\clos{H^s_p(x, Re^{2C}, \OM)}. 
                         \]
Since $R$ is arbitrary, it follows that $x\in H^s_p(x, R, \OM)$ for all $R>0$ and then arguing as at the end 
of the proof of Lemma~\ref{lm:clusgeodray}, we have $x\in H^s_o(x, R, \OM)$ for all $R>0$, $o\in\OM$ and $x\in\bdy{\OM}$. 
This establishes the result.                                                         
\end{proof}                         
                   
\section{Continuous extension of biholomorphisms}\label{S:context}
In this section, we present the proof of our result regarding the continuous extension
of biholomorphisms. As an upshot, we shall characterize those bounded simply-connected domains
in $\C$ that have property \eqref{E:smallbigsingle} in problem $(*)$.
We first recall a Julia lemma essentially due to Abate.
\begin{result}\label{Res:Julialemma}
Let $D_1\subset\C^n$ and $D_2\subset\C^m$ be bounded domains, and let 
$f:D_1\to D_2$ be a holomorphic map. 
Suppose there exist points $p\in D_1$, $q\in D_2$, a constant $\alpha>0$, 
and a sequence $(w_\nu)\subset D_1$ converging to $x\in\partial D_1$ such that 
$f(w_\nu)\to y\in\partial D_2$ and
\[
\lim_{\nu\to\infty}\!\big(\koba_{D_1}(p,w_\nu)
    -\koba_{D_2}(q,f(w_\nu))\big)
   \le \tfrac12\log\alpha.
\]
Then, for every $R>0$,
\[
f\big(H^s_p(x,R)\big)\subset H^b_q(y,\alpha R).
\]
\end{result}
\noindent Abate proved the above result when $D_1=D_2$ and $p=q$
(see, e.g., \cite[Proposition~2.4.15]{{Abate:iteration89}}). The general
case follows by the same argument.

\begin{lemma}\label{lm:contextprop}
Let $D$, $\OM$ be bounded domains that are complete with respect to the Kobayashi distance. Suppose
there exist $o\in D$, $p\in\OM$ such that
\begin{itemize}
\item for any two points $y_1\neq y_2\in\bdy D$, there exists an $R>0$
such that $H^b_o(y_1, R)\cap H^b_o(y_2, R)=\emptyset$, and
\smallskip

\item for any $x\in\bdy\OM$ and any $R>0$, $H^s_p(x, R)\neq\emptyset$.
\end{itemize}
Then any biholomorphism $\Phi:\OM\to D$ extends as a continuous map from $\clos{\OM}$ onto $\clos{D}$.
\end{lemma}

\begin{proof} 
Let $o\in D$ be as in the statement of the lemma. Observe that\,---\,owing to the last part
of Result~\ref{res:horo_prop}\,---\,we may assume that
$p\in\OM$ satisfies $\Phi(p)=o$. 
We claim that, given $x\in\bdy\OM$, let $(w_n)\subset\OM$ be a sequence
that converges to $x$ such that 
$\Phi(w_n)\to y\in\bdy D$. Then
\begin{equation}\label{E:smallintobig}
 \Phi(H^s_p(x, R))\subset H^b_o(y, R) \ \ \ \forall R>0. 
 \end{equation}
 To see this, we note that\,---\,since $\Phi$ is an isometry\,---\,for each $n$, we have
 \begin{align*}
 \koba_\OM(p, w_n)-\koba_D(o, \Phi(w_n))= \koba_\OM(p, w_n)-\koba_D(\Phi(p), \Phi(w_n))
   = \koba_\OM(p, w_n)-\koba_D(p, w_n)=0.
 \end{align*}
 Therefore, an application of Result~\ref{Res:Julialemma} with $\alpha=1$,
 establishes \eqref{E:smallintobig}.
 \smallskip
 
 Fix a point $x\in\bdy\OM$, we shall show that the limit 
  \[
   \lim_{z\in\OM, \,z\to x}\Phi(z)
  \]
exists. Suppose, to get a contradiction, that the above limit
does not exist. This implies that\,---\,since $\Phi$ is a biholomorphism and $D$ is bounded\,---\,there exist sequences $(z_n)$ and $(w_n)$ in 
$\OM$ that converge to $x$ and points $y_1\neq y_2\in\bdy D$
such that $\Phi(z_n)\to y_1$, $\Phi(w_n)\to y_2$. 
Applying \eqref{E:smallintobig}, we get 
\[  
  \Phi(H^s_o(x, R))\subset H^b_o(y_1, R)\cap H^b_o(y_2, R) \ \ \ \forall R>0. 
  \]
Since $y_1$ and $y_2$ are distinct, there exists 
an $R_0>0$ such that $H^b_o(y_1, R)\cap H^b_o(y_2, R)=\emptyset$ for all $R<R_0$. This implies that 
$H^s_o(x, R)=\emptyset$ for all $R<R_0$ which is a contradiction to our hypothesis whence the 
result. 
\smallskip

It is now routine to check that $\Phi$ extends continuously on $\clos{\OM}$.
\end{proof}

We now present

\begin{proof}[The proof of Theorem~\ref{thm:contextbiholo}]
Since $D$ is metrically-regular, there exists a point $o\in D$ such that $(1)$ and $(2)$ 
in Definition~\ref{D:metreg} are satisfied.  
Let $\Phi:\OM\lrarw D$ be the inverse of the
map $\Psi$ and let $p\in\OM$ be such that $\Phi(p)=o$.
\smallskip

Lemma~\ref{lm:contextprop} implies that $\Phi:\OM\lrarw D$ extends as a continuous map from 
$\clos{\OM}$ onto $\clos{D}$\,---\,that we continue to denote as $\Phi$ itself. Consider a point 
$x\in\bdy\OM$ and let $\xi=\Phi(x)$. Given a point $z\in H^b_p(x, R)$,  
there exists a sequence $(z_n)$ in $\OM$ such that $z_n\to x$ and 
 \[
  \liminf_{w\to x}\big(\koba_\OM(z, w)-\koba_\OM(p, w)\big)=
   \lim_{n\to\infty}\big(\koba_\OM(z, z_n)-\koba_\OM(p, z_n)\big) <(1/2)\log R. 
  \]
  Since $\Phi$ is an isometry with respect to the Kobayashi distances, from the above inequality, we get
  \[
  \lim_{n\to\infty}\big(\koba_D(\Phi(z), \Phi(z_n))-\koba_D(o, \Phi(z_n))\big)<(1/2)\log R.
  \]
  Note $\Phi$ is continuous at $x$ and $\Phi(x)=\xi$, since $(z_n)$ converges to $x$, it follows that $(\Phi(z_n))$
  converges to $\xi$. 
  This implies that $\Phi(z)\in H(\xi, R)$ for any $z\in H^b_p(x, R)$. Therefore,
  \begin{equation}\label{E:phibighorounithoro}
   \Phi(H^b_p(x, R))\subset H(\Phi(x), R) \ \ \ \forall R>0.
   \end{equation}

   Since small horospheres and big horospheres in $D$ are identical, and 
   $\Psi$ an isometry, an application of Result~\ref{Res:Julialemma}\,---\,as in
   the proof of Lemma~\ref{lm:contextprop}\,---\,implies that
   given $\zeta\in\bdy D$, and $(\zeta_n)\subset D$, a sequence
   converging to $\zeta$, such that $\Psi(\zeta_n)\to y$ for some $y\in\bdy\OM$,
   we have
\begin{equation}\label{E:psiunithorobighoro}
 \Psi(H(\zeta, R)\subset H^b_p(y, R) \ \ \ \forall R>0. 
 \end{equation}
Fix a point $\xi\in\bdy D$, we shall show that the limit 
 \[
   \lim_{z\in D, \,z\to\xi}\Psi(z) \ \  \text{exists}. 
   \]
Suppose, to get a contradiction, that the above is not true. This implies
that\,---\,since $\Psi$ is a biholomorphism and $\OM$ is bounded\,---\,there exist sequences
$(z_n), (w_n)\subset D$ converging to $\xi$ and $x_1\neq x_2\in\bdy\OM$ 
such that $\Psi(z_n)\to x_1$ and $\Psi(w_n)\to x_2$. Applying
\ref{E:psiunithorobighoro}, we get
\[
 \Psi(H(\xi, R))\subset H^b_p(x_i, R) \ \ \ \forall R>0, \ i=1, 2. 
 \]
 Since $\Phi$ is continuous at $x_1$, the sequence $\Phi(\Psi(z_n))=z_n$ converges to $\Phi(x_1)$, as $n\to\infty$. 
 This implies that $\Phi(x_1)=\xi$. Similarly 
 $\Phi(x_2)=\xi$. By \eqref{E:phibighorounithoro}, we have
 \[
  \Phi(H^b_p(x_i, R))\subset H(\xi, R) \ \ \ \forall R>0,\,i=1,2. 
  \]
  From the last two relations, for each $i=1, 2$, we get
  \[
   H^b_p(x_i, R)\subset\Psi(H(\xi, R))\subset H^b_p(x_i, R)\ \ \ \forall R>0. 
   \]
This implies that $H^b_p(x_1, R)=H^b_p(x_2, R)=\Psi(H(\xi, R))$ for all $R>0$. But this contradicts 
our hypothesis regarding the big horospheres on the domain $\OM$.
\smallskip
   
This implies that $\Psi$ extends as a continuous map from $\clos{D}$ onto
$\clos{\OM}$. Therefore $\Psi:D\lrarw\OM$ extends as a homeomorphism from  $\clos{D}$
onto $\clos{\OM}$. 
 \end{proof}
 
As a corollary of the above result, we present:
\begin{proof}[The Proof of Theroem~\ref{thm:jordandomain}]
Recall $\OM$ is a bounded simply-connected domain that satisfies 
\[
\clos{H^s_o(x, R)}\cap\bdy\OM=\clos{H^b_o(x, R)}\cap\bdy\OM=\{x\},
\]
for any $x\in\bdy\OM$ and for any $R>0$, with $o=\psi(0)$, where $\psi:\unitdisk\lrarw\OM$
is a Riemann map. Clearly $H^s_o(x, R)$ is nonempty for all $x\in\bdy\OM$, $R>0$ and 
also given $x\neq y\in\bdy\OM$, $H^b_o(x, R)\neq H^b_o(y, R)$ for any $R>0$. Clearly
$\unitdisk$ is metrically-regular, therefore, 
by Theorem~\ref{thm:contextbiholo}\,---\,taking $D=\unitdisk$\,---\,we see that $\psi$ extends 
as a homeomorphism from $\clos{\unitdisk}$ onto $\clos{\OM}$. Hence $\OM$ has to be a Jordan 
domain. 
\end{proof}

We now present an example of a domain that does not satisfy condition $(a)$ in Theorem~\ref{thm:contextbiholo} but satisfies 
condition $(b)$ therein. 
\begin{example}\label{exa:smallhoroempty}
Consider the simply-connected domain
\[
 \OM:=\unitdisk\setminus\{[0, 1)\}.
 \]
Let $\psi:\unitdisk\lrarw\OM$ be a Riemann map and $\phi:\OM\lrarw\unitdisk$ be its inverse. Since $\bdy\OM$ is locally connected, by Carath\'{e}odory's 
Extension Theorem \cite[Theorem~4.3.1]{BCD:2020}, $\psi$ has a continuous extension $\hat{\psi}$ from $\clos{\unitdisk}$ 
onto $\clos{\OM}$. Then, for a given $x\in (0, 1]$, $\hat{\psi}^{-1}(x)$ 
has two pre-images, say $\xi_1\neq\xi_2\in\bdy\unitdisk$, see \cite[Proposition~4.3.5]{BCD:2020}. 
Proceeding exactly as in the claim that appears in the proof of Lemma~\ref{lm:contextprop}, 
we get that 
\[
  \phi\big(H^s_o(x, R)\big)\subset H(\xi_1, R)\cap H(\xi_2, R) \ \ \ \forall R>0, 
  \]
  where $o=\psi(0)$.  As $\xi_1$ and $\xi_2$ are distinct points in $\bdy\unitdisk$, we can find an $R_0>0$ such that for any $R<R_0$, 
$H(\xi_1, R)\cap H(\xi_2, R)=\emptyset$.
Therefore, for each $x\in (0, 1]$, there exists an $R_0>0$ such that 
$H^s_o(x, R)=\emptyset$ for all $R<R_0$. Consequently, $\OM$ does not satisfy condition $(a)$ in Theorem~\ref{thm:contextbiholo}.
\smallskip 

It is a fact that simply-connected visibility domains are exactly those whose boundary is locally 
connected, therefore, $\OM$ is a visibility domain and hence satisfies
\[
\clos{H^b_o(x, R)}\cap\bdy\OM=\{x\} \ \ \ \forall R>0. 
\]
This clearly implies that $\OM$ satisfies condition $(b)$ in Theorem~\ref{thm:contextbiholo}. 
\end{example}

\begin{remark}\label{rm:dichotomy}
Let $\OM$ be a bounded simply-connected domain and let $\phi:\OM\lrarw\unitdisk$ be 
a biholomorphism. Suppose $\OM$ has the property that, given $o\in\OM$, for any $x\in\bdy\OM$, 
and $R>0$, $H^s_o(x, R)$ is nonempty. Then, by Lemma~\ref{lm:contextprop}, $\phi$ extends 
continuously from $\clos{\OM}$ onto $\clos{\unitdisk}$. Then it is a fact that 
$\phi$ has to be a homeomorphism \cite[Theorem~5.12]{Conway} and hence $\OM$ is a Jordan domain. Therefore, 
for a bounded simply-connected domain $\OM$ we have the following dichotomy:
either they are Jordan domains or there exists a point $x\in\bdy\OM$ and an $R>0$
such that $H^s_o(x, R)=\emptyset$ for some $o\in\OM$. 
\end{remark}

\section{Examples}\label{S:examples}
 In one dimension, we introduce a class of domains that are locally
 a model-Gromov-hyperbolic, in particular, they also include
 non-Gromov-hyperbolic domains and domains whose boundary could be a fractal set.          
\begin{definition}\label{d:cond1}
Let $\OM\subset\C$ be a Kobayashi hyperbolic domain. We say 
that $\OM$ satisfies Condition~1 if for any point $p\in\bdy{\OM}$,
there exist an $r>0$ and a topological embedding 
$\tau_p: \overline{\mathbb{D}}\longrightarrow \overline{\Omega}$ such that $\tau_p(\unitdisk)\subset\Omega$ and 
$B(p; r)\cap\OM\subset\tau_p(\unitdisk)$.
\end{definition}
\noindent It is easy to see why domains that satisfy Condition~1 are
local-model-Gromov-hyperbolic domain: take a point $p\in\bdy\OM$,
and let $\tau_p$, $r>0$ be as above in the definition. Note that 
$\tau_p(\unitdisk)$ is a Jordan domain, therefore appealing to 
the Riemann Mapping Theorem and Carath\'{e}odory's Extension Theorem, 
we can assume, without loss of any generality, that $\tau_p$ is a 
biholomorphism that extends as a homeomorphism from $\unitdisk$ to
$\clos{\tau_p(\unitdisk)}$. Clearly $\tau_p(\unitdisk)$ equipped with its 
Kobayashi distance is a model-Gromov-hyperbolic metric space. Taking 
\[
U_p:=\tau_p(\unitdisk)\cup B(p, r)
\]
we see that $U_p\cap\OM=\tau_p(\unitdisk)$, since $p\in\OM$ is arbitrary,
we see that $\OM$ is a local model-Gromov-hyperbolic domain.
\smallskip

\noindent Planar domains that satisfy Condition~1 were recently
introduced in \cite{CGMS:2024} where a characterization of such domains 
is also given: these are domains for which
boundary is locally connected and each boundary component is a 
Jordan curve in the Riemann sphere \cite[Proposition~4.15]{CGMS:2024}.
It was also established 
that these domains satisfy the strongest form of visibility property. 
\begin{example}[{\cite[Proposition~9.1]{BZ:2023}}]\label{exam:plannongrom}
Consider the domain $\OM\subset\C$ defined by 
\[
 \OM:=\C\setminus\cup_{m, n\in\Z}\{z\in\C\,:\,|z-(m+ni)|<1/4\}. 
\]
Clearly $\OM$ satisfies Condition~1 but $(\OM, \koba_\OM)$ is not Gromov hyperbolic. The obstruction to Gromov hyperbolicity comes from the existence of a $\Z^2$ action, see \cite[Proposition~9.1]{BZ:2023} for the details. 
\end{example}

The second example is an example of a domain where each of its boundary component is 
a Jordan curve given locally by the graph of a nowhere differentiable continuous function. It is borrowed
from \cite[Example~6.6]{CGMS:2024}. 
\begin{example}
The {\em Takagi} or {\em blancmange} function is a continuous nowhere-differentiable
function defined as follows.
\begin{equation} \label{eqn:Takagi_fn_def}
	\forall\,t\in\R,\; T(t):=\sum_{j=0}^{\infty} 2^{-j} \distance(2^j t,\Z).
\end{equation}
In the above equation, $\distance(\bcdot\,,\Z)$ denotes the Euclidean distance 
between an arbitrary point of $\R$ and the set of integers $\Z$. 
Note that $T$ is clearly continuous and
$1$-periodic, hence uniformly continuous. It also follows immediately
from the definition that $0\leq T\leq 1$. In \cite{CGMS:2024}, Example~6.6, the domain 
$V_T$ is constructed which is an infinitely connected domain whose boundary components are either
homeomorphic to $\R$ or is Jordan curve that locally is a graph of the Takagi function. It follows that 
$V_T$ is a domain that satisfies Condition~1 but its boundary is very irregular. In fact, no point in 
$\bdy V_T$ is a local Goldilocks point, to use the terminology of \cite[Definition~1.3]{BZ:2023}.
\end{example}\vspace{-5mm}

\section*{Acknowledgments}
The authors are grateful to the anonymous referee for several helpful comments and suggestions,
particularly for pointing out an alternative proof of Theorem~1.3 and for remarks that improved the clarity of Section~5.

\end{document}